\pgfplotsset{compat=1.14} 
\newtheorem{definition}{{ Definition}}
\newtheorem{theorem}{Theorem}
\newtheorem{pro}{ Proposition }
\newtheorem{lemma}{Lemma}
\newtheorem{rem}{Remark}
\newcommand{\ii}{\operatorname{i}}
\renewcommand{\d}{\operatorname{d}}
\newcommand{\Exp}[1]{\operatorname{e}^{#1}}
\newcommand{\diag}{\operatorname{diag}}
\newcommand{\C}{\mathbb{C}}
\newcommand{\T}{\mathbb{T}}
\newcommand{\N}{\mathbb{N}}
\newcommand{\R}{\mathbb{R}}
\newcommand{\Z}{\mathbb{Z}}
\newcommand{\res}[2]{\operatorname{Res}\left(#1,#2\right)}
\def\@settitle{\begin{center}%
		\baselineskip14\p@\relax
		\bfseries
		\uppercasenonmath\@title
		\@title
		\ifx\@subtitle\@empty\else
		\\[1ex]\uppercasenonmath\@subtitle
		\footnotesize\mdseries\@subtitle
		\fi
	\end{center}%
}
\def\subtitle#1{\gdef\@subtitle{#1}}
\def\@subtitle{}
\begin{document}
\title[Discrete Hypergeometrical Orthogonal Polynomials]{Laguerre--Freud Equations for  three families of 
	 hypergeometrical discrete orthogonal polynomials}

%
\author[I Fernández-Irisarri]{Itsaso Fernández-Irisarri$^1$}
\email{$^1$itsasofe@ucm.es}
\address{$^1$Departamento de Física Teórica, Universidad Complutense de Madrid, Plaza Ciencias 1, 28040-Madrid, Spain}

\author[M Mañas]{Manuel Mañas$^2$}
\email{$^2$manuel.manas@ucm.es}

\address{$^2$Departamento de Física Teórica, Universidad Complutense de Madrid, Plaza Ciencias 1, 28040-Madrid, Spain \&
	Instituto de Ciencias Matematicas (ICMAT), Campus de Cantoblanco UAM, 28049-Madrid, Spain}

\thanks{The authors thanks financial support from the Spanish ``Agencia Estatal de Investigación" research project [PGC2018-096504-B-C33], \emph{Ortogonalidad y Aproximación: Teoría y Aplicaciones en Física Matemática} and research project [PID2021- 122154NB-I00], \emph{Ortogonalidad y aproximación con aplicaciones en machine learning y teoría de la probabilidad}.}

\begin{abstract}
The Cholesky factorization of the moment matrix is considered for 
discrete orthogonal polynomials of hypergeometrical type. We derive the Laguerre--Freud equations when the  first moments of the weights are given by 
the  ${}_1F_2$, ${}_2F_2$ and ${}_3F_2$ generalized hypergeometrical functions.

\end{abstract}

\subjclass{42C05,33C45,33C47}

\keywords{Hypergeometrical discrete orthogonal polynomials, Laguerre--Freud equations, generalized hypergeometrical functions, Pearson equations, semiclassical discrete orthogonal polynomials, Jacobi matrix, recursion relations, banded matrices}
\maketitle

\allowdisplaybreaks
\section{Introduction}

Discrete orthogonal polynomials is  an important and active area of the theory of orthogonal polynomials see \cite{NSU,baik}  as well as \cite{Ismail,Ismail2,Beals_Wong,walter}. When a discrete Pearson equation is fulfilled by the weight we are dealing with semiclassical discrete orthogonal polynomials,  see \cite{diego_paco,diego_paco1  ,diego,diego1}. For some specific type of weights of generalized Charlier and Meixner types, the corresponding  Freud--Laguerre type equations for the coefficients of the three term recurrence has been studied, see for example \cite{clarkson,filipuk_vanassche0,filipuk_vanassche1,filipuk_vanassche2,smet_vanassche}.

This paper is the final one in  our series of  works on the of the Gauss--Borel factorization of the corresponding moment matrix, see \cite{intro}, in   standard discrete orthogonality. In \cite{Manas_Fernandez-Irrisarri}  the Cholesky factorization of the moment matrix was used to  study  discrete orthogonal polynomials  on the homogeneous lattice.  For weights subject to a discrete Pearson equation, so that the moments  where logarithmic derivatives of generalized hypergeometric functions,  a banded semi-infinite matrix  $\Psi$, the so called Laguerre--Freud structure matrix,  that models the shifts by $\pm 1$  in the independent variable of the sequence  of orthogonal polynomials was given.  
Contiguous   relations for the generalized  hypergeometric functions translate  into symmetries for the corresponding moment matrix, and  the 3D Nijhoff--Capel  discrete Toda lattice \cite{nijhoff,Hietarinta}  describes the corresponding contiguous shifts for the squared norms of the orthogonal polynomials. In \cite{Manas} we gave an interpretation for the  contiguous transformations for the generalized hypergeometric functions in terms of simple Christoffel and Geronimus transformations. Using  the Geronimus--Uvarov perturbations we got determinantal expressions for the shifted orthogonal polynomials.
Then, in \cite{Irrisari-Manas} we discussed  the generalized Charlier, Meixner and type I Hahn discrete orthogonal polynomials (according to the nomenclature in \cite{diego_paco}),   analyzed the  Laguerre--Freud structure matrix $\Psi$, and derive from its banded structure and its compatibility with the Toda equation and the Jacobi matrix, a number of nonlinear equations for the coefficients $\{\beta_{n},\gamma_n\}$ of the three term recursion relations $zP_n(z)=P_{n+1}(z)+\beta_n P_n(z)+\gamma_n P_{n-1}(z)$ satisfied by the orthogonal polynomial sequence.
These Laguerre--Freud equations are of the form
$	\gamma_{n+1} =\mathscr G (n,\gamma_n,\gamma_{n-1},\dots,\beta_n,\beta_{n-1}\dots)$ and $\beta_{n+1 }= \mathscr B (n,\gamma_{n+1},\gamma_n,\dots,\beta_n,\beta_{n-1},\dots)$,
for some functions $\mathscr B,\mathscr G$. Magnus  \cite{magnus,magnus1,magnus2,magnus3} named these type of relations, attending to \cite{laguerre,freud}, as Laguerre--Freud relations.
There are several papers discussing Laguerre--Freud relations for the generalized Charlier, generalized Meixner and type I generalized Hahn cases, see \cite{smet_vanassche,clarkson,filipuk_vanassche0,filipuk_vanassche1,filipuk_vanassche2,diego}. 

In this paper we conclude our studies on these themes, and extend these methods to three  families of discrete orthogonal polynomials with first moments given by the generalized hypergeometric functions  ${}_1F_2$, ${}_2F_2$ and ${}_3F_2$. Notice that generalized Charlier, Meixner and type Hahn correspond to  ${}_0F_1$,  ${}_1F_1$ and  ${}_2F_1$, respectively.
For the three families  we explicitly compute the Laguerre-Freud structure matrix which happens to be a pentadiagonal, hexadiagonal or heptadiagonal banded matrices, respectively. We also compute the Laguerre--Freud  equations and give explicit expressions for the leading nontrivial coefficients of the orthogonal polynomials in terms of the recursion coefficients.

The layout of the paper is as follows. We first complete this introduction by discussing, without proof but referring to the appropriate sources, some preliminary material. Then, we have three sections devoted each of them to the corresponding hypergeometrical function, ${}_1F_2$, ${}_2F_2$ and ${}_3F_2$, finding in each case the Laguerre--Freud structure matrix and corresponding Laguerre--Freud equations. 

\subsection{Pearson equations and discrete orthogonal polynomials} 

Let us recall some relevant facts of the theory of orthogonal polynomials. 
Given a linear functional $\rho_z\in\C^*[z]$,  the corresponding   moment  matrix is
 \begin{align*}
 G&=(G_{n,m}), &G_{n,m}&=\rho_{n+m}, &\rho_n&= \big\langle\rho_z,z^n\big\rangle, & n,m\in\N_0:=\{0,1,2,\dots\},
 \end{align*}
with $\rho_n$ being the $n$-th moment of the linear functional $\rho_z$.
Let us assume that the moment matrix is such that  all its truncations, which are Hankel matrices, $G_{i+1,j}=G_{i,j+1}$,
 \begin{align*}
 G^{[k]}=\begin{bNiceMatrix}[small]
 G_{0,0}&\Cdots &G_{0,k-1}\\
 \Vdots & & \Vdots\\
 G_{k-1,0}&\Cdots & G_{k-1,k-1}
 \end{bNiceMatrix}=\begin{bNiceMatrix}[
 small]
 \rho_{0}&\rho_1&\rho_2&\Cdots &\rho_{k-1}\\
 \rho_1 &\rho_2 &&\Iddots& \rho_k\\
 \rho_2&&&&\Vdots\\
 \Vdots& &\Iddots&&\\[9pt]
 \rho_{k-1}&\rho_k&\Cdots &&\rho_{2k-2}
 \end{bNiceMatrix}
 \end{align*}
 are nonsingular; i.e. the Hankel determinants $\varDelta_k:=\det G^{[k]} $ do not cancel, $\varDelta_k\neq 0 $, $k\in\N_0$. Then, there are  monic polynomials
 \begin{align}\label{eq:polynomials}
 P_n(z)&=z^n+p^1_n z^{n-1}+\dots+p_n^n, & n&\in\N_0,
 \end{align}
 satisfying the following orthogonality relations 
$  \big\langle \rho, P_n(z)z^k\big\rangle =0$,  for $ k\in\{0,\dots,n-1\}$ and 
   $\big\langle \rho, P_n(z)z^n\big\rangle =H_n\neq 0$,
 and $\{P_n(z)\}_{n\in\N_0}$ is a sequence of  orthogonal polynomials, i.e., 
$ \big\langle\rho,P_n(z)P_m(z)\big\rangle=\delta_{n,m}H_n$ for $n,m\in\N_0$.
The   symmetric bilinear form $ \langle F, G\rangle_\rho:=\langle \rho, FG\rangle$,
is  such that the moment matrix is the Gram matrix of this bilinear form and
$\langle P_n, P_m\rangle_\rho:=\delta_{n,m} H_n$.

Introducing  he monomial sequence
$\chi(z):=\left(\begin{NiceMatrix}
1&z&z^2&\Cdots
\end{NiceMatrix}\right)^\top$
the moment  matrix is  $G=\left\langle\rho, \chi\chi^\top\right\rangle$,
and $\chi$ is an eigenvector of the \emph{shift matrix}, $\Lambda \chi=x\chi$, where
\begin{align*}
\Lambda:=\left[\begin{NiceMatrix}[columns-width = auto,small]
0 & 1 & 0 &\Cdots\\
0&0&1&\Ddots\\
\Vdots& \Ddots&\Ddots &\Ddots
\end{NiceMatrix}\right].
\end{align*}
Hence
$\Lambda G=G\Lambda^\top$, this equation is equivalent to the fact that  the moment matrix is a Hankel.
As the moment matrix is symmetric its Borel--Gauss factorization is a Cholesky factorization,  i.e.
\begin{align}\label{eq:Cholesky}
G=S^{-1}HS^{-\top},
\end{align}
where $S$ is a lower unitriangular matrix that can be written as 
\begin{align*}
	S=\left[\begin{NiceMatrix}[columns-width = auto,small]
	1 & 0 &\Cdots &\\
	S_{1,0 } &  1&\Ddots&\\
	S_{2,0} & S_{2,1} & \Ddots &\\
	\Vdots & \Ddots& \Ddots& 
	\end{NiceMatrix}\right],
	\end{align*}
	and $H=\diag(H_0,H_1,\dots)$ is a  diagonal matrix, with $H_k\neq 0$, for $k\in\N_0$.
	The Cholesky  factorization does hold whenever the principal minors of the moment matrix; i.e., the Hankel determinants $\varDelta_k$,  do not cancel.
	
	The entries $P_n(z)$ of the polynomial sequence 
	\begin{align}\label{eq:PS}
	P(z):=S\chi(z),
	\end{align}
	are the monic orthogonal polynomials of the functional $\rho$. 

	We have the determinantal expressions $	H_{k}=\frac{\varDelta_{k+1}}{\varDelta_k}$ and $p^1_k=-\frac{\tilde \varDelta_k}{\varDelta_k}$,
	with 
	\begin{align*}
	 \varDelta_k&:=\begin{vNiceMatrix}[small]
\rho_{0}&\Cdots & &\rho_{k-2}&\rho_{k-1}\\
\Vdots    &                 &\Iddots& \Iddots&\Vdots\\
   &                 &                &                  &\\
\rho_{k-2}&             &                &&\rho_{2k-3}\\[14pt]
\rho_{k-1}& \Cdots&              &\rho_{2k-3}&\rho_{2k-2}
\end{vNiceMatrix}, 
&	\tilde \varDelta_k&:=\begin{vNiceMatrix}[small]
	\rho_{0}&\Cdots& &\rho_{k-2}&\rho_k\\
	\Vdots & &\Iddots& \rho_{k-1}&\Vdots\\
	& &\Iddots&\\
		\rho_{k-2}& & &&\rho_{2k-2}\\[10pt]
	\rho_{k-1}& \Cdots& &\rho_{2k-3}&\rho_{2k-1}
	\end{vNiceMatrix}.
	\end{align*}
The lower Hessenberg semi-infinite matrix
\begin{align}\label{eq:Jacobi}
J=S\Lambda S^{-1}
\end{align}
 has the polynomial sequence $P(z)$ as eigenvector with eigenvalue $z$. 
The Hankel condition  $\Lambda G=G\Lambda^\top$ and the Cholesky factorization gives
\begin{align}\label{eq:symmetry_J}
J H=(JH)^\top =HJ^\top.
\end{align}
As  the Hessenberg matrix $JH$ is symmetric the Jacobi matrix $J$  is tridiagonal, i.e.
\begin{align*}
J=\left[\begin{NiceMatrix}[
	small]
\beta_0 & 1& 0&\Cdots& \\
\gamma_1 &\beta_1 & 1 &\Ddots&\\[8pt]
0 &\gamma_2 &\beta_2 &1 &\\
\Vdots&\Ddots& \Ddots& \Ddots&\Ddots 
\end{NiceMatrix}\right]
\end{align*}
and we find  the three term recursion relation 
$zP_n(z)=P_{n+1}(z)+\beta_n P_n(z)+\gamma_n P_{n-1}(z)$,
that with the  initial conditions $P_{-1}=0$ and $P_0=1$ completely determines   the sequence of orthogonal polynomials $\{P_n(z)\}_{n\in\N_0}$ in terms of the recursion coefficients $\beta_n,\gamma_n$.
	The recursion coefficients are given by
	\begin{align}\label{eq:equations0}
	\beta_n&=p_n^1-p_{n+1}^1=-\frac{\tilde \varDelta_n}{\varDelta_n}+\frac{\tilde \varDelta_{n+1}}{\varDelta_{n+1}},&  \gamma_{n+1}&=\frac{H_{n+1}}{H_{n}}=\frac{\varDelta_{n+1}\varDelta_{n-1}}{\varDelta_n^2},& n\in\N_0,
	\end{align}

We introduce the following diagonal matrices
$ \gamma:=\diag (\gamma_1,\gamma_2,\dots)$ and $\beta:=\diag(\beta_0 ,\beta_{1},\dots)$.
In general, given any semi-infinite matrix $A$, we will write $A=A_-+A_+$, where $A_-$ is a strictly lower triangular matrix and $A_+$ an upper triangular matrix. Moreover, $A_0$ will denote the diagonal part of  $A$.

The lower Pascal matrix  is 
\begin{align}\label{eq:Pascal_matrix}
B&=(B_{n,m}), & B_{n,m}&:= \begin{cases}
\displaystyle \binom{n}{m}, & n\geq m,\\
0, &n<m.
\end{cases}
\end{align}
so that
\begin{align}\label{eq:Pascal}
\chi(z+1)=B\chi(z).	
\end{align}
The inverse of this Pascal matrix is
\begin{align*}
B^{-1}&=[\tilde B_{n,m}], & \tilde B_{n,m}&:= \begin{cases}
(-1)^{n+m}\displaystyle \binom{n}{m}, & n\geq m,\\
0, &n<m.
\end{cases}
\end{align*}
and
$\chi(z-1)=B^{-1}\chi(z)$.	
 The    \emph{dressed Pascal matrices} are
$\Pi:=SBS^{-1}$ and  $ \Pi^{-1}:=SB^{-1}S^{-1}$ and give he connection between shifted and non shifted polynomials, i.e.
\begin{align}\label{eq:PascalP}
P(z+1)&=\Pi P(z), & P(z-1)&=\Pi^{-1}P(z).
\end{align}
The lower Pascal matrix can be expressed  as follows
\begin{align*}
B^{\pm 1}&=I\pm\Lambda^\top D+\big(\Lambda^\top\big)^2D^{[2]}\pm\big(\Lambda^\top\big)^3D^{[3]}+\cdots,
\end{align*}
 where 
$ D=\diag(1,2,3,\dots)$ and  $D^{[k]}:=\frac{1}{k}\diag\big(k^{(k)}, (k+1)^{(k)},(k+2)^{(k)}\cdots\big)$, 
 in terms of the falling factorials
$ x^{(k)}:=x(x-1)(x-2)\cdots (x-k+1)$.
That is,  
\begin{align*}
D^{[k]}_n&=\frac{(n+k)\cdots (n+1)}{k}, & k&\in\N, & n&\in\N_0.
\end{align*}
The lower unitriangular factor can be  written as
$S=I+\Lambda^\top S^{[1]}+\big(\Lambda^\top\big)^2S^{[2]}+\cdots$
with 
$S^{[k]}=\diag \big(S^{[k]}_0, S^{[k]}_1,\dots\big)$.  
We will use the \emph{shift operators} $T_\pm$ acting over the diagonal matrices as follows
\begin{align*}
T_-\diag(a_0,a_1,\dots)&:=\diag (a_1, a_2,\dots),&
T_+\diag(a_0,a_1,\dots)&:=\diag(0,a_0,a_1,\dots).
\end{align*}
For any diagonal matrix $A=\diag(A_0,A_1,\dots)$ we have
\begin{align}
	\Lambda A&=(T_-A)\Lambda,&   A	\Lambda &=\Lambda (T_+A), 	& A \Lambda^\top  &=\Lambda^\top(T_-A), &\Lambda^\top  A &= (T_+A)\Lambda ^\top.
\end{align}
In terms of these shift operators we find
\begin{align*}
	2D^{[2]}&=(T_-D)D, & 3D^{[3]}&=(T_-^2D)(T_-D)D=2(T_-D^{[2]})D=2D^{[2]}(T_-^2 D).
\end{align*}
For the inverse matrix 
\begin{align*}
S^{-1}&=I+\Lambda^\top S^{[-1]}+\big(\Lambda^\top\big)^2 S^{[-2]}+\cdots,
\end{align*}
we have
\begin{align*}
 S^{[-1]}&=-S^{[1]}, \\S^{[-2]}&=-S^{[2]}+(T_-S^{[1]})S^{[1]},\\
S^{[-3]}&=-S^{[3]}+(T_-S^{[2]})S^{[1]}
+(T_-^2S^{[1]})S^{[2]}
-(T_-^2 S^{[1]})
(T_-S^{[1]})S^{[1]}.
\end{align*}
	Corresponding expansions  for the dressed Pascal matrices  are
\begin{align*}
\Pi^{\pm 1}=I+\Lambda^\top \pi^{[\pm 1]}+(\Lambda^\top)^2  \pi^{[\pm 2]}+\cdots
\end{align*}
with $\pi^{[\pm n]}=\diag(\pi^{[\pm n]}_0,\pi^{[\pm n]}_1,\dots)$. 
	We have
	\begin{gather}\label{eq:pis}
	\begin{aligned}
	\pi^{[\pm 1]}_n&=\pm (n+1), &
	\pi^{[\pm 2]}_n&=\frac{(n+2)(n+1)}{2}\pm
	p^1_{n+2}(n+1)\mp (n+2) p^{1}_{n+1}\\&&&=\frac{(n+2)(n+1)}{2}\mp (n+1)\beta_{n+1}
	\mp  p^{1}_{n+1},
	\end{aligned}\\\label{eq:piss}
	\begin{multlined}[t][.9\textwidth]
	\pi^{[\pm 3]}_n=\pm\frac{(n+3)(n+2)(n+1)}{3}+\frac{(n+2)(n+1)}{2}p^1_{n+3}-
	\frac{(n+3)(n+2)}{2}p^1_{n+1}\\\pm (n+1) p^2_{n+3}\mp (n+3)p^2_{n+2}\pm
	(n+3)p^1_{n+2}p^1_{n+1}\mp(n+2)p^1_{n+3}p^1_{n+1}.\end{multlined}
	\end{gather}
Moreover, the following relations are fulfilled
	\begin{gather}\label{eq:pis2}
	\begin{aligned}
		\pi^{[1]}+\pi^{[-1]}&=0, &\pi^{[2]}+\pi^{[-2]}&=2D^{[2]}, &\pi^{[3]}+\pi^{[-3]}&=2((T_-^2S^{[1]})D^{[2]}-(T_-D^{[2]})S^{[1]}).
	\end{aligned}
	\end{gather}

We now recall some facts regarding discrete orthogonality. Hence, we focus on measures with  support  on the homogeneous lattice  $\N_0$, i.e.
$\rho=\sum_{k=0}^\infty \delta(z-k) w(k)$,
with  moments given by
\begin{align}\label{eq:moments}
\rho_n=\sum_{k=0}^\infty k^n w(k),
\end{align}
and, in particular,  with $0$-th moment given by
\begin{align}\label{eq:first_moment}
\rho_0=\sum_{k=0}^\infty w(k).
\end{align}

The weights  we consider in this paper satisfy the following  \emph{discrete Pearson equation}
\begin{align}\label{eq:Pearson0}
\nabla (\sigma w)&=\tau w,
\end{align}
that is $\sigma(k) w(k)-\sigma(k-1) w(k-1)=\tau(k)w(k)$, for $k\in\N$, 
with $\sigma(z),\tau(z)\in\R[z]$.
If we write  $\theta:=\tau-\sigma$, the previous Pearson equation reads
\begin{align}\label{eq:Pearson}
\theta(k+1)w(k+1)&=\sigma(k)w(k), &
k\in\N_0.
\end{align}

If  $N+1:=\deg\theta(z)$ and  $M:=\deg\sigma(z)$,
and the  zeros of these polynomials are  $\{-b_i+1\}_{i=1}^{N}$ and $\{-a_i\}_{i=1}^M$
we write
$\theta(z)= z(z+b_1-1)\cdots(z+b_{N}-1)$ and $\sigma(z)= \eta (z+a_1)\cdots(z+a_M)$.
According to \eqref{eq:first_moment} the $0$-th moment  
\begin{align*}
\rho_0&=\sum_{k=0}^\infty w(k)=\sum_{k=0}^\infty \frac{(a_1)_k\cdots(a_M)_k}{(b_1+1)_k\cdots(b_{N}+1)_k}\frac{\eta^k}{k!}=\tensor[_M]{F}{_{N}} (a_1,\dots,a_M;b_1,\dots,b_{N};\eta)
=
{\displaystyle \,{}_{M}F_{N}\left[{\begin{matrix}a_{1}&\cdots &a_{M}\\b_{1}&\cdots &b_{N}\end{matrix}};\eta\right].}
\end{align*}
is the generalized hypergeometric function, where we are using the two standard 
notations,
see \cite{generalized_hypegeometric_functions,slater}.
Then,  according to
\eqref{eq:moments}, for $n\in\N$, the corresponding  higher moments  $\rho_n=\sum_{k=0}^\infty k^n w(k)$, are
\begin{align*}
\rho_n&=\vartheta_\eta^n\rho_0=\vartheta_\eta^n\Big({\displaystyle \,{}_{M}F_{N}\left[{\begin{matrix}a_{1}&\cdots &a_{M}\\b_{1}&\cdots &b_{N}\end{matrix}};\eta\right]}\Big), &\vartheta_\eta:=\eta\frac{\partial }{\partial \eta}.
\end{align*}

Given a function $f(\eta)$, we consider the Wronskian
\begin{align*}
\mathscr W_k(f):=\begin{vNiceMatrix}[
	small]
f &\vartheta_\eta f& \vartheta_\eta^2f&\Cdots &\vartheta_\eta^kf\\
	\vartheta_\eta f& \vartheta_\eta^2f&&\Iddots& \vartheta_\eta^{k+1}f\\
	\vartheta_\eta^2 f&&&&\Vdots\\
	\Vdots& &\Iddots&&\\
	\vartheta_z^kf&\vartheta_\eta^{k+1}f&\Cdots&&\vartheta_\eta^{2k}f
\end{vNiceMatrix}.
\end{align*}
Let us  introduce the following tau-functions
\begin{align*}
\tau_k&:=\mathscr W_{k}\Big({\displaystyle \,{}_{M}F_{N}\left[{\begin{matrix}a_{1}&\cdots &a_{M}\\b_{1}&\cdots &b_{N}\end{matrix}};\eta\right]}\Big),
\end{align*}
i.e. Wronskians of generalized hypergeometric functions.
Then, we have that 
\begin{align*}
\varDelta_k&=\tau_k, & 
\tilde \varDelta_k &=\vartheta_\eta\tau_k, &H_k&=\frac{\tau_{k+1}}{\tau_k},	&
p^1_k&=-\vartheta_\eta\log \tau_k.
\end{align*}

In \cite{Manas_Fernandez-Irrisarri} it was shown that:
\begin{theorem}[Hypergeometric symmetries]
	Let the weight $w$ be subject to a discrete Pearson equation of the type \eqref{eq:Pearson}, where the functions $\theta,\sigma$  are  polynomials, with  $\theta(0)=0$. Then, 
	\begin{enumerate}
		\item 	The  moment matrix fulfills
		\begin{align}\label{eq:Gram symmetry}
			\theta(\Lambda)G=B\sigma(\Lambda)GB^\top.
		\end{align}
		\item The  Jacobi matrix satisfies 
		\begin{align}\label{eq:Jacobi symmetry}
			\Pi^{-1}	H\theta(J^\top)=\sigma(J)H\Pi^\top,
		\end{align}	
		and the matrices $H\theta(J^\top)$ and $\sigma(J)H$ are symmetric.
	\end{enumerate}
\end{theorem}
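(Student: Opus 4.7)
The plan is to prove part (i) by computing $\theta(\Lambda)G$ as a moment sum, using the hypothesis $\theta(0)=0$ to shift the summation index, and then invoking the Pearson equation together with the intertwining $\chi(z+1)=B\chi(z)$. Concretely, since $\Lambda\chi(z)=z\chi(z)$, we have $\theta(\Lambda)G=\langle\rho,\theta(z)\chi(z)\chi(z)^\top\rangle=\sum_{k=0}^\infty \theta(k)\chi(k)\chi(k)^\top w(k)$. The term $k=0$ drops out because $\theta(0)=0$, so the index shift $k\mapsto k+1$ rewrites the sum as $\sum_{k=0}^\infty \theta(k+1)\chi(k+1)\chi(k+1)^\top w(k+1)$. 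Substituting the Pearson equation \eqref{eq:Pearson} and $\chi(k+1)=B\chi(k)$ pulls $B$ and $B^\top$ out of the sum and converts $\theta(k+1)w(k+1)$ into $\sigma(k)w(k)$, leaving $B\langle\rho,\sigma(z)\chi\chi^\top\rangle B^\top=B\sigma(\Lambda)GB^\top$, which is precisely \eqref{eq:Gram symmetry}.

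For part (ii) I would conjugate \eqref{eq:Gram symmetry} by the Cholesky factor $S$. From $J=S\Lambda S^{-1}$ one obtains $\theta(\Lambda)S^{-1}=S^{-1}\theta(J)$ and $\sigma(\Lambda)S^{-1}=S^{-1}\sigma(J)$, while the definition $\Pi=SBS^{-1}$ gives $SB=\Pi S$ and $B^\top S^\top=S^\top\Pi^\top$. Inserting $G=S^{-1}HS^{-\top}$ into \eqref{eq:Gram symmetry} and multiplying on the left by $S$ and on the right by $S^\top$ therefore collapses to $\theta(J)H=\Pi\,\sigma(J)H\,\Pi^\top$. Multiplying by $\Pi^{-1}$ on the left yields $\Pi^{-1}\theta(J)H=\sigma(J)H\,\Pi^\top$, and the desired form \eqref{eq:Jacobi symmetry} follows once one rewrites $\theta(J)H=H\theta(J^\top)$.

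That last rewriting, together with the claimed symmetry of $H\theta(J^\top)$ and $\sigma(J)H$, is the only part requiring a short separate argument. The starting point is \eqref{eq:symmetry_J}, which states that $JH$ is symmetric, i.e.\ $JH=HJ^\top$. Right-multiplying by $H^{-1}J H$ and iterating shows by induction on $p\in\N_0$ that $J^p H=H(J^\top)^p$, and hence each $J^p H$ is symmetric. Taking the linear combination prescribed by $\theta$ gives $\theta(J)H=H\theta(J^\top)$, a symmetric matrix; the identical argument applies to $\sigma$.

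I do not expect any serious obstacle: the whole argument is bookkeeping once the index shift enabled by $\theta(0)=0$ is spotted. The single place that needs care is tracking left versus right actions of $B$, $\Pi$, $S$, $J$ and $\Lambda$, since the Hankel/Cholesky world swaps row and column indices; maintaining the convention $J=S\Lambda S^{-1}$ and $\Pi=SBS^{-1}$ throughout, and using \eqref{eq:symmetry_J} only to commute $H$ past polynomials in $J$, keeps the manipulation transparent.
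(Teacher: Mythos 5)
Your proof is correct and is essentially the standard derivation that this paper (which states the theorem without proof, citing \cite{Manas_Fernandez-Irrisarri}) relies on: the index shift enabled by $\theta(0)=0$ together with the Pearson equation \eqref{eq:Pearson} and $\chi(k+1)=B\chi(k)$ gives \eqref{eq:Gram symmetry}, and dressing by the Cholesky factors $S,S^\top$ plus the Hankel symmetry $JH=HJ^\top$ from \eqref{eq:symmetry_J} (which indeed yields $J^pH=H(J^\top)^p$ by induction, hence $\theta(J)H=H\theta(J^\top)$ and the symmetry of both $H\theta(J^\top)$ and $\sigma(J)H$) gives \eqref{eq:Jacobi symmetry}. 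No gaps.
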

\begin{theorem}[Laguerre--Freud structure matrix]
	Let us assume that the weight $w$  solves the discrete Pearson equation \eqref{eq:Pearson} with  $\theta,\sigma$ polynomials such that $\theta(0)=0$, $\deg\theta(z)=N+1$, $ \deg\sigma(z)=M$. 	Then,:
	\begin{enumerate}
	\item The Laguerre--Freud structure matrix
\begin{align}\label{eq:Psi}
\Psi&:=\Pi^{-1}H\theta(J^\top)=\sigma(J)H\Pi^\top=\Pi^{-1}\theta(J)H=H\sigma(J^\top)\Pi^\top\\
&=\theta(J+I)\Pi^{-1} H=H\Pi^\top\sigma(J^\top-I),\label{eq:Psi2}
\end{align}
has only  $N+M+2$ possibly nonzero  diagonals ($N+1$ superdiagonals and $M$ subdiagonals) 
\begin{align*}
\Psi=(\Lambda^\top)^M\psi^{(-M)}+\dots+\Lambda^\top \psi^{(-1)}+\psi^{(0)}+
\psi^{(1)}\Lambda+\dots+\psi^{(N+1)}\Lambda^{N+1},
\end{align*}
for some diagonal matrices $\psi^{(k)}$. In particular,  the lowest subdiagonal and highest superdiagonal  are given by
\begin{align}\label{eq:diagonals_Psi}
\left\{
\begin{aligned}
(\Lambda^\top)^M\psi^{(-M)}&=\eta(J_-)^MH,&
\psi^{(-M)}=\eta H\prod_{k=0}^{M-1}T_-^k\gamma=\eta\diag\Big(H_0\prod_{k=1}^{M}\gamma_k, H_1\prod_{k=2}^{M+1}\gamma_k,\dots\Big),\\
\psi^{(N+1)} \Lambda^{N+1}&=H(J_-^\top)^{N+1},&
\psi^{(N+1)}=H\prod_{k=0}^{N}T_-^k\gamma=\diag\Big(H_0\prod_{k=1}^{N+1}\gamma_k, H_1\prod_{k=2}^{N+2}\gamma_k,\dots\Big).
\end{aligned}
\right.
\end{align}
\item The vector $P(z)$ of orthogonal polynomials fulfill the following structure equations
\begin{align}\label{eq:P_shift}
\theta(z)P(z-1)&=\Psi H^{-1} P(z), &
\sigma(z)P(z+1)&=\Psi^\top H^{-1} P(z).
\end{align}
\item 	The following compatibility conditions for the Laguerre--Freud  and Jacobi matrices  hold
\begin{subequations}
	\begin{align}\label{eq:compatibility_Jacobi_structure_a}
	[\Psi H^{-1},J]&=\Psi H^{-1}, \\ \label{eq:compatibility_Jacobi_structure_b}	[J, \Psi ^\top H^{-1}]&=\Psi ^\top H^{-1}.
	\end{align}
\end{subequations}
\end{enumerate}
\end{theorem}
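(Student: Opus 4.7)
The plan is to derive the three assertions from the two inputs furnished by the previous theorem—the Jacobi symmetry $\Pi^{-1}H\theta(J^\top)=\sigma(J)H\Pi^\top$ and the symmetry of the matrices $H\theta(J^\top)$ and $\sigma(J)H$—together with a single commutation relation between $J$ and $\Pi$. The latter is obtained as follows: since $P(z)=S\chi(z)$, any matrix identity $MP(z)=0$ (read as a vector of polynomials) forces $M=0$. Applying this principle to $J\Pi P(z)=JP(z+1)=(z+1)P(z+1)=\Pi(J+I)P(z)$ yields $J\Pi=\Pi(J+I)$, hence $\Pi^{-1}J\Pi=J+I$ and by transposition $\Pi^\top J^\top\Pi^{-\top}=J^\top+I$.

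\textbf{Step 1: the six equivalent forms of $\Psi$.} The equalities $\Pi^{-1}H\theta(J^\top)=\Pi^{-1}\theta(J)H$ and $\sigma(J)H\Pi^\top=H\sigma(J^\top)\Pi^\top$ follow directly from the symmetries $H\theta(J^\top)=\theta(J)H$ and $\sigma(J)H=H\sigma(J^\top)$. The central equality $\Pi^{-1}H\theta(J^\top)=\sigma(J)H\Pi^\top$ is the Jacobi symmetry itself. For the remaining two forms, the commutation relation above gives $\Pi^{-1}\theta(J)=\theta(J+I)\Pi^{-1}$ and $\sigma(J^\top)\Pi^\top=\Pi^\top\sigma(J^\top-I)$, producing $\theta(J+I)\Pi^{-1}H$ and $H\Pi^\top\sigma(J^\top-I)$ respectively.

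\textbf{Step 2: banded structure and extreme diagonals.} Reading $\Psi=\Pi^{-1}\theta(J)H$, since $\Pi^{-1}$ is lower unitriangular and $\theta(J)$ has at most $N+1$ superdiagonals, $\Psi$ has at most $N+1$ superdiagonals. Reading $\Psi=\sigma(J)H\Pi^\top$, since $\Pi^\top$ is upper unitriangular and $\sigma(J)$ has at most $M$ subdiagonals, $\Psi$ has at most $M$ subdiagonals; together this accounts for the $N+M+2$ diagonals. For the extreme diagonals only the leading monomial of $\theta$ or $\sigma$ contributes, because subtraction of lower‑degree terms cannot reach the outermost diagonal; moreover, on those outermost diagonals the dressed Pascal factors $\Pi^{-1}$ and $\Pi^\top$ act as the identity. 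A direct induction gives $(J^{N+1})_{n,n+N+1}=1$ and $(J^M)_{n,n-M}=\gamma_n\cdots\gamma_{n-M+1}$, from which the stated closed forms for $\psi^{(N+1)}$ and $\psi^{(-M)}$ follow (using $\gamma_k=H_k/H_{k-1}$ to switch between the two presentations in \eqref{eq:diagonals_Psi}).

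\textbf{Step 3: structure equations.} Using $P(z\pm1)=\Pi^{\pm1}P(z)$ from \eqref{eq:PascalP} and the three‑term recurrence $\theta(J)P(z)=\theta(z)P(z)$, $\sigma(J)P(z)=\sigma(z)P(z)$, one computes
\begin{align*}
\theta(z)P(z-1)=\Pi^{-1}\theta(J)P(z)=\Pi^{-1}\theta(J)H\,H^{-1}P(z)=\Psi H^{-1}P(z),
\end{align*}
and, using that $\sigma(J)H$ is symmetric so that $\Psi^\top=(\sigma(J)H\Pi^\top)^\top=\Pi\,\sigma(J)H$,
\begin{align*}
\sigma(z)P(z+1)=\Pi\,\sigma(J)H\,H^{-1}P(z)=\Psi^\top H^{-1}P(z).
\end{align*}

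\textbf{Step 4: compatibility.} Apply $J$ to the first structure equation and use $JP(w)=wP(w)$ for $w=z$ and $w=z-1$:
\begin{align*}
(\Psi H^{-1}J-J\Psi H^{-1})P(z)=z\theta(z)P(z-1)-(z-1)\theta(z)P(z-1)=\theta(z)P(z-1)=\Psi H^{-1}P(z),
\end{align*}
which, by the basis property of $P(z)$, gives \eqref{eq:compatibility_Jacobi_structure_a}. An analogous computation from $\sigma(z)P(z+1)=\Psi^\top H^{-1}P(z)$, using $JP(z+1)=(z+1)P(z+1)$, yields \eqref{eq:compatibility_Jacobi_structure_b}. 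The main care required is bookkeeping across the six equivalent forms of $\Psi$ and tracking the shifts induced by $\Pi^{\pm 1}$; the rest is routine.
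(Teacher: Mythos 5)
Your argument is correct. Note first that the paper itself does not prove this theorem: it is quoted in the preliminary section as a result established in the reference [Mañas--Fernández-Irrisarri--González-Fernández], so there is no in-paper proof to compare against. Your derivation is the natural one and, as far as can be judged, reproduces the mechanism of the cited source: the central identity $\Pi^{-1}H\theta(J^\top)=\sigma(J)H\Pi^\top$ and the symmetry of $H\theta(J^\top)$ and $\sigma(J)H$ are taken as input from the preceding theorem, the commutation $J\Pi=\Pi(J+I)$ (equivalently the dressed form of $\Lambda B=B(\Lambda+I)$) yields the two remaining expressions \eqref{eq:Psi2}, the band count follows from reading $\Psi$ once as $\Pi^{-1}\theta(J)H$ and once as $\sigma(J)H\Pi^\top$, and the structure and compatibility equations follow from $JP(w)=wP(w)$ together with the linear-independence principle that a row-finite matrix annihilating the polynomial vector $P(z)$ must vanish. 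All the individual steps check out, including the extreme-diagonal computations $(J^{N+1})_{n,n+N+1}=1$ and $(J^{M})_{n,n-M}=\gamma_n\cdots\gamma_{n-M+1}$ and the identification $\Psi^\top=\Pi\,\sigma(J)H$ used in the second structure equation. The only point worth making explicit is that every matrix identity you assert (e.g.\ $[\Psi H^{-1},J]=\Psi H^{-1}$) involves only banded or row-finite products, so the passage from ``holds when applied to $P(z)$ for all $z$'' to the matrix identity itself is legitimate; you state this principle at the outset, which suffices.
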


We now recall some facts regarding the Toda flows appearing in this discrete orthogonal polynomial setting.
Let us define the strictly lower triangular matrix
$\Phi:=(\vartheta_{\eta} S ) S^{-1}$.
Then, see \cite{Manas_Fernandez-Irrisarri}, we have
that the orthogonal polynomial sequence  $P$ fulfills
	$\vartheta_{\eta} P=\Phi  P$,
and the Sato--Wilson equations 
$-\Phi  H+\vartheta_\eta H-H \Phi^\top=JH$
	are satisfied.
	Consequently, $\Phi=-J_-$ and, for $n\in\N_0$, we have 
$\vartheta_\eta \log H_n=J_{n,n}$.
 Moreover,
$	\Phi =(\vartheta_{\eta} S)S^{-1}=-\Lambda^\top\gamma$ and $	(\vartheta_\eta H) H^{-1}=\beta$ are satisfied.
	The functions $q_n:=\log H_n$, $n\in\N$, satisfy the Toda equations
	\begin{align}\label{eq:Toda_equation}
	\vartheta_\eta^2q_n=\Exp{q_{n+1}-q_n}-\Exp{q_n-q_{n-1}}.
	\end{align}
	For $n\in\N$, we also have
$\vartheta_\eta P_{n}(z)=-\gamma_n P_{n-1}(z)$. Additionally, the  Lax equation 
	$	\vartheta_\eta J=[J_+,J]$ is fulfilled,
	and  recursion coefficients satisfy the following Toda system, $	\vartheta_\eta\beta_n=\gamma_{n+1}-\gamma_n,
	$ and  $\vartheta_\eta\log\gamma_n=\beta_{n}-\beta_{n-1}$,
for $n\in\N_0$ and $\beta_{-1}=0$. Thus, we get 
$	\vartheta_\eta^2\log\gamma_n+2\gamma_n=\gamma_{n+1}+\gamma_{n-1}$.

From the compatibility of 
$P(z+1)=\Pi P(z)$ and $\vartheta_\eta (P(z))=\Phi  P(z)$ 
 we get
$\vartheta_\eta\Pi =[\Phi ,\Pi ]$.
In the general case, the dressed Pascal matrix $\Pi$ is a lower unitriangular semi-infinite matrix,  that possibly has an infinite number of subdiagonals. However, when the weight $w(z)=v(z)\eta^z$ satisfies the Pearson equation \eqref{eq:Pearson}, with $v$ independent of $\eta$, that is
$\theta(k+1)v(k+1)\eta=\sigma(k)v(k)$,
the situation improves as we have the banded semi-infinite matrix $\Psi$ that models the shift in the $z$ variable as in \eqref{eq:P_shift}. From the previous discrete 
Pearson equation we see that  $\sigma(z)=\eta\kappa(z)$ with $\kappa,\theta$  $\eta$-independent polynomials in $z$
\begin{align}\label{eq:Pearson_Toda}
\theta(k+1)v(k+1)=\eta\kappa(k)v(k).
\end{align}

\begin{pro}[\cite{Manas_Fernandez-Irrisarri}]
	Let us assume   a weight $w$ satisfying the Pearson equation
	\eqref{eq:Pearson}. Then, the Laguerre--Freud structure matrix $ \Psi$ given in \eqref{eq:Psi} satisfies
\begin{subequations}
		\begin{align}\label{eq:eta_compatibility_Pearson_1a}
	\vartheta_\eta(\eta^{-1}\Psi^\top H^{-1} )&=[\Phi ,\eta^{-1}\Psi^\top H^{-1}  ], \\
	\vartheta_\eta(\Psi H^{-1} )&=[\Phi ,\Psi H^{-1} ].\label{eq:eta_compatibility_Pearson_1b}
	\end{align}
\end{subequations}
Relations \eqref{eq:eta_compatibility_Pearson_1a} and \eqref{eq:eta_compatibility_Pearson_1b} are \emph{gauge} equivalent.
\end{pro}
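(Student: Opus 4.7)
The approach is to take the derivative $\vartheta_\eta$ of the two structure relations in \eqref{eq:P_shift} and translate them into matrix Lax-type equations via the Toda identity $\vartheta_\eta P(z)=\Phi P(z)$. The two key inputs are: (i) $\Phi$ is independent of the spectral variable $z$, so $\vartheta_\eta P(z\pm 1)=\Phi P(z\pm 1)$; (ii) by the hypothesis \eqref{eq:Pearson_Toda}, $\theta$ is $\eta$-independent and $\sigma=\eta\kappa$ with $\kappa$ also $\eta$-independent, so the whole $\eta$-dependence of $\sigma$ sits in an overall scalar factor.

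First I would prove \eqref{eq:eta_compatibility_Pearson_1b}. Applying $\vartheta_\eta$ to $\theta(z)P(z-1)=\Psi H^{-1}P(z)$ gives, on the left, $\theta(z)\Phi P(z-1)=\Phi\theta(z)P(z-1)=\Phi\Psi H^{-1}P(z)$ (the scalar $\theta(z)$ commutes with the $z$-independent matrix $\Phi$), and on the right, $\vartheta_\eta(\Psi H^{-1})P(z)+\Psi H^{-1}\Phi P(z)$. Since $P(z)=S\chi(z)$ with $S$ invertible, the components $\{P_n(z)\}_{n\in\N_0}$ form a basis of $\C[z]$, so the coefficient matrices match and one reads off $\vartheta_\eta(\Psi H^{-1})=[\Phi,\Psi H^{-1}]$. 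For \eqref{eq:eta_compatibility_Pearson_1a} I would first move the factor $\eta$ in $\sigma=\eta\kappa$ to the right-hand side to rewrite the second identity of \eqref{eq:P_shift} as $\kappa(z)P(z+1)=\eta^{-1}\Psi^\top H^{-1}P(z)$. Exactly the same differentiation argument, with $\kappa$ replacing $\theta$ and $P(z+1)$ replacing $P(z-1)$, then yields \eqref{eq:eta_compatibility_Pearson_1a}.

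For the gauge equivalence, I would use the alternative forms \eqref{eq:Psi}--\eqref{eq:Psi2} together with the symmetry $HJ^\top H^{-1}=J$ coming from \eqref{eq:symmetry_J} to put both Lax operators in the same canonical shape,
\begin{align*}
\Psi H^{-1}=\Pi^{-1}\theta(J),\qquad \eta^{-1}\Psi^\top H^{-1}=\Pi\kappa(J).
\end{align*}
In that form, each of \eqref{eq:eta_compatibility_Pearson_1a} and \eqref{eq:eta_compatibility_Pearson_1b} becomes an equation of the type $\vartheta_\eta(\Pi^{\mp 1}f(J))=[\Phi,\Pi^{\mp 1}f(J)]$ for an $\eta$-independent polynomial $f$, and the two are interchanged by the formal exchange $\Pi\leftrightarrow\Pi^{-1}$, $\theta\leftrightarrow\kappa$, i.e.\ by reversing the direction of the discrete Pearson shift \eqref{eq:Pearson}. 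The main obstacle I anticipate is keeping careful track of the scalar $\eta^{-1}$, which via $\vartheta_\eta(\eta^{-1}X)=\eta^{-1}(\vartheta_\eta X-X)$ produces an extra $-\eta^{-1}X$ term that must be absorbed by the intertwining $\Pi^{-1}J\Pi=J+I$; this is precisely the gauge factor that renders the two equations equivalent.
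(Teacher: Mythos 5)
The paper does not actually prove this proposition: it is imported verbatim from \cite{Manas_Fernandez-Irrisarri}, so there is no in-text argument to compare against. Your derivation of \eqref{eq:eta_compatibility_Pearson_1a}--\eqref{eq:eta_compatibility_Pearson_1b} is correct and is the natural one in this framework: since $P(w)=S\chi(w)$ with $\chi$ independent of $\eta$, one has $\vartheta_\eta P(w)=(\vartheta_\eta S)S^{-1}P(w)=\Phi P(w)$ for \emph{any} argument $w$, which justifies your step (i); differentiating $\theta(z)P(z-1)=\Psi H^{-1}P(z)$ and $\kappa(z)P(z+1)=\eta^{-1}\Psi^{\top}H^{-1}P(z)$ and using that $\{P_n\}$ is a polynomial basis (so $AP(z)\equiv 0$ forces $A=0$) gives exactly the two Lax equations. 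You are right to flag that the hypothesis must be read together with \eqref{eq:Pearson_Toda}, since $\vartheta_\eta$ is meaningless unless $\theta,\kappa$ are $\eta$-independent and $\sigma=\eta\kappa$.

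The only soft spot is the gauge-equivalence paragraph. Your canonical forms $\Psi H^{-1}=\Pi^{-1}\theta(J)$ and $\eta^{-1}\Psi^{\top}H^{-1}=\Pi\kappa(J)$ are correct (the second uses $H\sigma(J^{\top})H^{-1}=\sigma(J)$ from \eqref{eq:symmetry_J}), but ``the two are interchanged by the formal exchange $\Pi\leftrightarrow\Pi^{-1}$, $\theta\leftrightarrow\kappa$'' is an observation of symmetry rather than a proof of equivalence, and the closing remark about absorbing $-\eta^{-1}X$ via $\Pi^{-1}J\Pi=J+I$ is not needed once the $\eta$ has been moved across before differentiating. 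The cleanest way to make the equivalence precise from your canonical forms is to note that the paper already records $\vartheta_\eta\Pi=[\Phi,\Pi]$ and $\vartheta_\eta J=[J_+,J]=[\Phi,J]$ (since $\Phi=-J_-$), so \emph{every} word in $\Pi^{\pm1}$ and $J$ with $\eta$-independent polynomial coefficients satisfies the same Lax equation $\vartheta_\eta X=[\Phi,X]$; both \eqref{eq:eta_compatibility_Pearson_1a} and \eqref{eq:eta_compatibility_Pearson_1b} are then instances of one statement, related by reversing the direction of the Pearson shift, which is presumably what ``gauge equivalent'' is meant to convey.
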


\section{The  ${}_1 F_2$ hypergeometrical family}
We choose $ \sigma(z)=\eta (z+a_1)$ and $\theta(z)=z(z+b_1)(z+b_2)$
with corresponding weight given by
\begin{align*}
	w(z)=\frac{(a_1)_z}{(b_1+1)_z(b_2+1)_z}\frac{\eta^z}{z!}.
\end{align*}
with  first moment given by the  generalized  hypergeometric function:
$	\rho_0={}_1F_2\left[\!\!{\begin{array}{c}a_1\\b_1+1,b_2+1\end{array}};\eta\right]$, and subsequent moments
$\rho_n=\vartheta_{\eta}^n {}_1F_2\left[\!\!{\begin{array}{c}a_1\\b_1+1,b_2+1\end{array}};\eta\right]$. The corresponding moments exist for any $\eta\in\C$.

\begin{theorem}[The generalized Laguerre--Freud  structure matrix]\label{teo:1F2}
	For $\sigma(z)=\eta(z+a_1)$ and  $\theta(z)=z(z+b_1)(z+b_2)$  
	we find for the subleading coefficients the following expression
	\begin{multline*}p^1_n=\frac{n(n+1)}{2}-n\beta_n-\frac{1}{\eta+\gamma_{n+1}}(\gamma_{n+1}(\gamma_n+\gamma_{n+1}+\gamma_{n+2}+(\beta_{n+1}+b_1)(\beta_{n+1}+b_2)\\+\beta_n(\beta_{n+1}+\beta_n+b_1+b_2)-n(2\beta_n+\beta_{n+1}+b_1+b_2-n)-\eta)-\eta(n+1)(\beta_n+a_1))
	\end{multline*}
and we also find
\begin{multline*}
	\pi^{[2]}_{n-1}=\frac{1}{\eta+\gamma_{n+1}}\Big(
\gamma_{n+1}\big(
\gamma_n+\gamma_{n+1}+\gamma_{n+2}+(\beta_{n+1}+b_1)(\beta_{n+1}+b_2)
+\beta_{n}(\beta_{n+1}+\beta_{n}+b_1+b_2)\\-n(2\beta_n+\beta_{n+1}+b_1+b_2-2n)-\eta
\big)-\eta(n+1)(a_1+a_2)
\Big).
\end{multline*}

	The Laguerre--Freud structure matrix is the following pentadiagonal matrix
	\begin{align*}
		\Psi=\left[\begin{NiceMatrix}[
			small]
			\psi^{(0)}_0 &\psi^{(1)}_0 &\psi^{(2)}_0&H_3&0 &\Cdots&&\\
			\eta H_1& \psi^{(0)}_1 &\psi^{(1)}_1&\psi^{(2)}_1& H_4&\Ddots&&\\[5pt]
			0& \eta H_2&\psi^{(0)}_2&\psi^{(1)}_2& \psi^{(2)}_2& H_5&&\\
			\Vdots&\Ddots &\Ddots&\Ddots&\Ddots&\Ddots&\Ddots&\Ddots\\
		\end{NiceMatrix}\right],
	\end{align*}
	with
	\begin{align*}
		\psi^{(0)}_n&=\eta H_n(n+\beta_{n}+a_1),\\
		\psi^{(1)}_n&=\eta\Big(H_{n+1}+H_n\big(\pi^{[2]}_{n-1}+(n+1)(\beta_n+a_1)\big)\Big),\\
		\psi^{(2)}_n&=H_{n+2}(\beta_n+\beta_{n+1}+\beta_{n+2}+b_1+b_2-n).
	\end{align*}

\end{theorem}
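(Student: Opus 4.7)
The plan is to compute the five nonzero diagonals of $\Psi$ by combining the two representations $\Psi=\sigma(J)H\Pi^\top=\Pi^{-1}H\theta(J^\top)$ in \eqref{eq:Psi}. The general Laguerre--Freud structure theorem stated above guarantees that $\Psi$ is pentadiagonal with $M=1$ subdiagonal and $N+1=3$ superdiagonals, and \eqref{eq:diagonals_Psi} identifies the outer diagonals in terms of $\eta$, $H$ and $\gamma$; after the telescoping $H_n\gamma_{n+1}\cdots\gamma_{n+j}=H_{n+j}$, these give the entries $\eta H_{n+1}$ (subdiagonal) and $H_{n+3}$ (top superdiagonal) that appear in the matrix of the statement.

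For the three innermost diagonals $\psi^{(0)}$ and $\psi^{(1)}$, the idea is to use the lower representation $\Psi=\eta(J+a_1I)H\Pi^\top$. Since $\sigma(J)H$ is tridiagonal with entries $\eta H_n$, $\eta(\beta_n+a_1)H_n$ and $\eta H_{n+1}$ below, on and above the diagonal, and $\Pi^\top=I+\pi^{[1]}\Lambda+\pi^{[2]}\Lambda^2+\cdots$ with $\pi^{[1]}_n=n+1$ from \eqref{eq:pis}, a direct block multiplication yields the stated $\psi^{(0)}_n=\eta H_n(n+\beta_n+a_1)$ and $\psi^{(1)}_n=\eta\bigl(H_{n+1}+H_n\pi^{[2]}_{n-1}+(n+1)(\beta_n+a_1)H_n\bigr)$.

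For $\psi^{(2)}_n$ the upper representation $\Psi=\Pi^{-1}H\theta(J^\top)$ is more convenient: since $\theta(J^\top)$ has only three superdiagonals, $(\theta(J^\top))_{n-2,n+2}=0$, so that only the rows $k=n-1,n$ of $\Pi^{-1}$ contribute. A path-count for $(J^\top)^{j}_{k,n+2}$ with $j\leq 3$, together with $\pi^{[-1]}_{n-1}=-n$ and the telescoping $H_{n-1}\gamma_n\gamma_{n+1}\gamma_{n+2}=H_n\gamma_{n+1}\gamma_{n+2}=H_{n+2}$, produces the claimed $\psi^{(2)}_n=H_{n+2}\bigl(\beta_n+\beta_{n+1}+\beta_{n+2}+b_1+b_2-n\bigr)$.

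Finally, to pin down $\pi^{[2]}_{n-1}$, and hence (through $p^1_n=\tfrac{n(n+1)}{2}-n\beta_n-\pi^{[2]}_{n-1}$ from \eqref{eq:pis}) the subleading coefficient $p^1_n$, I would compute $\psi^{(1)}_n$ also via the upper representation. This brings in the three rows $k=n-2,n-1,n$ of $\Pi^{-1}$, hence the additional entry $\pi^{[-2]}_{n-2}=\tfrac{n(n-1)}{2}+(n-1)\beta_{n-1}+p^1_{n-1}$. Using $p^1_{n-1}-p^1_n=\beta_{n-1}$ together with the expression for $p^1_n$ just mentioned, this reduces to $\pi^{[-2]}_{n-2}=n^2+n(\beta_{n-1}-\beta_n)-\pi^{[2]}_{n-1}$, so the $p^1$-dependence collapses. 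Equating the two expressions for $\psi^{(1)}_n$ then produces a linear equation for $\pi^{[2]}_{n-1}$ whose leading coefficient is $\eta+\gamma_{n+1}$ (the $\eta$ coming from the lower representation, the $\gamma_{n+1}$ from the leading term of $(\theta(J^\top))_{n,n+1}$); solving it gives the stated closed form, and the formula for $p^1_n$ follows. The main obstacle is this last algebraic step: one has to path-count carefully the six paths contributing to $(J^\top)^3_{n,n+1}$, compute $(\theta(J^\top))_{n-1,n+1}=\gamma_n\gamma_{n+1}(\beta_{n-1}+\beta_n+\beta_{n+1}+b_1+b_2)$, and check that the $\beta_{n-1}$-dependence introduced by $\pi^{[-2]}_{n-2}$ cancels against the $\beta_{n-1}$-piece coming from $(\theta(J^\top))_{n-1,n+1}$, leaving a formula depending only on the data at indices $n-1,n,n+1,n+2$ as stated.
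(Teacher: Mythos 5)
Your proposal is correct and follows essentially the same route as the paper: read the subdiagonal, diagonal and a $\pi^{[2]}$-dependent first superdiagonal from $\Psi=\sigma(J)H\Pi^\top$, read $\psi^{(2)},\psi^{(3)}$ and a $\pi^{[-2]}$-dependent first superdiagonal from $\Psi=\Pi^{-1}H\theta(J^\top)$, and equate the two expressions for $\psi^{(1)}$ after eliminating $\pi^{[-2]}_{n-2}$ in favour of $\pi^{[2]}_{n-1}$, which yields the linear equation with coefficient $\eta+\gamma_{n+1}$. Your elimination $\pi^{[-2]}_{n-2}=n^2+n(\beta_{n-1}-\beta_n)-\pi^{[2]}_{n-1}$ via the explicit formulas \eqref{eq:pis} is exactly equivalent to the paper's combination of the shift identity for $\pi^{[-2]}$ with $\pi^{[2]}+\pi^{[-2]}=2D^{[2]}$, so there is no substantive difference.
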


\begin{proof}

As $\deg \sigma=1$ and  $\deg \theta=3$ for the Lagurre--Freud matrix we have
\begin{align*}
	\Psi=\Lambda^\top\psi^{(-1)}+\psi^{(0)}+\psi^{(1)}\Lambda+\psi^{(2)}\Lambda^2+\psi^{(3)}\Lambda^3.
\end{align*}

 Form $\Psi=\sigma(J)H\Pi^\top$  we get the first subdiagonal, the main diagonal and an expression for the first superdiagonal depending upon 
  $\pi^{[2]}$:
\begin{align*}
	\psi^{(-1)}&= \eta T_{-}H ,\\ \psi^{(0)}&= \eta H(T_{+}D+\beta+a_1),\\ \psi^{(1)}&=\eta\Big(T_{-}H+H\big(T_{+}\pi^{[2]}+D(\beta+a_1)\big)\Big).
\end{align*}

From $\Psi=\Pi^{-1}H\theta(J^\top)$ we get an expression for the first superdiagonal depending on de $T_{+}^2\pi^{[-2]}$and the other two nonzero superdiagonals
\begin{align*}
\psi^{(1)}&=	\begin{multlined}[t][.9\textwidth]T_{-}H[T_{+}\gamma+\gamma+T_{-}\gamma+(T_{-}\beta+b_1)(T_{-}\beta+b_2)+\beta(T_{-}\beta+\beta+b_1+b_2)-T_{+}D(T_{+}\beta+\beta\\+b_1+b_2+T_{-}\beta)+T_{+}^2\pi^{[-2]}],
\end{multlined}\\
\psi^{(2)}&=T_{-}^2H[\beta+T_{-}\beta+T_{-}^2\beta+b_1+b_2-T_{+}D],\\ \psi^{(3)}&=T_{-}^3H.
\end{align*}

To obtain the diagonal matrix  $\pi^{[\pm2]}$ we combine  $T^{2}_{+}\pi^{[-2]}-T_{+}\pi^{[-2]}=T_{+}D(T_{+}\beta-\beta-1)$ with  $\pi^{[-2]}=2D^{[2]}-\pi^{[2]}$ to get
\begin{align*}
	T_{+}^2\pi^{[-2]}=T_{+}D( T_{+}\beta-\beta-1)+2T_{+}D^{[2]}-T_{+}\pi^{[2]}.
\end{align*}
Now, equating the two formulas for $\psi^{(1)}$ and using the previous results we find
\begin{multline*}T_{+}\pi^{[2]}(\eta H+T_{-}H)=T_{-}H[T_{+}\gamma+\gamma+T_{-}\gamma+(T_{-}\beta+b_1)(T_{-}\beta+b_2)+\beta(T_{-}\beta+\beta+b_1+b_2)\\-T_{+}D(\beta+T_{-}\beta+b_1+b_2)-T_{+}D(\beta+1)+2T_{+}D^{[2]}-\eta I]-\eta H D(\beta+a_1).
\end{multline*}
Consequently, we derive the given expression for  $p^1_n$.
\end{proof}

We now explore the consequences of the compatibility relation $[\Psi H^{-1},J]=\Psi H^{-1}$. 
We get trivial relations and
$T_{+}\pi^{[2]}-T_{+}^2\pi^{[2]}=T_{+}D(T_{+}\beta-\beta+I)$. However, a new
 formula appears from the first superdiagonal:
\begin{multline*}\eta(\beta+T_{-}\beta-1)=T_{-}\gamma(T_{+}D-2T_{-}\beta-T_{-}^2\beta+b_1+b_2+1)+\gamma(\beta+1-T_{-}\beta)\\+T_{+}\gamma(T_{+}\beta+2\beta+b_1+b_2-T_{+}^2D+1)+\beta^3-(T_{-}\beta)^3+(T_{-}\beta)^2+\beta^2+\beta T_{-}\beta+\\(b_1+b_2)(\beta^2-(T_{-}\beta)^2+T_{-}\beta+\beta)-T_{+}D(2\beta^2-\beta T_{-}\beta-(T_{-}\beta)^2+3\beta)\\+(\beta+1-T_{-}\beta)[2T_{+}D^{[2]}-T_{+}D(b_1+b_2)+b_1b_2-T_{+}\pi^{[2]}].
\end{multline*}
No new  relations are obtained form the second compatibility \eqref{eq:eta_compatibility_Pearson_1a}.
Using this compatibility, we seek for  Laguerre--Freud equations
\begin{align}\label{eq:LF}
	\beta_{n+1}&=\mathscr B_n(\gamma_{n+1},\gamma_{n},\dots,\beta_{n},\beta_{n-1},\dots),&
	\gamma_{n+2}&=\mathscr G_n(\gamma_{n+1},\gamma_{n},\dots,\beta_{n+1},\beta_n,\dots).
\end{align}

\begin{theorem}
We find the following Laguerre--Freud equations
\begin{align*}
	\gamma_{n+2}&=\begin{multlined}[t][.9\textwidth]\frac{\eta+\gamma_{n+1}}{\gamma_{n+1}}n(\beta_{n-1}-\beta_n+1)-\big(\gamma_n+\gamma_{n+1}+(\beta_{n+1}+b_1)(\beta_{n+1}+b_2)\\+\beta_n(\beta_{n+1}+\beta_n+b_1+b_2)-n(2\beta_n+\beta_{n+1}+b_1+b_2-n)-\eta\big)+\frac{\eta}{\gamma_{n+1}}(n+1)(\beta_n+a_1)\\+\frac{\eta+\gamma_{n+1}}{(\eta+\gamma_n)\gamma_{n+1}}\big(\gamma_n(\gamma_{n-1}+\gamma_n+\gamma_{n+1}+(\beta_n+b_1)(\beta_n+b_2)+\beta_{n-1}(\beta_n+\beta_{n-1}+b_1+b_2)\\-(n-1)(2\beta_{n-1}+\beta_n+b_1+b_2-n+1)-\eta)-\eta n(\beta_{n-1}+a_1)\big),\end{multlined}\\
\beta_{n+2}&=\begin{multlined}[t][.8\textwidth]n-2\beta_{n+1}+b_1+b_2+1+\frac{1}{\gamma_{n+2}}\Big(\eta(1-\beta_n-\beta_{n+1})+\gamma_{n+1}(\beta_n+1-\beta_{n+1})\\+\gamma_n(\beta_{n-1}+2\beta_n+b_1+b_2-n+2)+\beta_n^3-\beta_{n+1}^3+\beta_{n+1}^2+\beta_n^2+\beta_n\beta_{n+1}\\+(b_1+b_2)(\beta_n^2-\beta^2_{n+1}+\beta_{n+1}+\beta_n)-n(2\beta_n^2-\beta_n\beta_{n+1}-\beta_{n+1}^2+3\beta_n)\\+(\beta_n-\beta_{n+1}+1)\big(n(n+1)-n(b_1+b_2)+b_1b_2\\-\frac{1}{\eta+\gamma_{n+1}}(\gamma_{n+1}(\gamma_n+\gamma_{n+1}+\gamma_{n+2}+(\beta_{n+1}+b_1)(\beta_{n+1}+b_2)\\+\beta_n(\beta_{n+1}+\beta_n+b_1+b_2)-n(2\beta_n+\beta_{n+1}+b_1+b_2-n)-\eta)-\eta(n+1)(\beta_n+a_1))\big)\Big).\end{multlined}
\end{align*}
\end{theorem}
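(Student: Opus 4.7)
The plan is to derive the two Laguerre--Freud equations by combining the structural formula for $\pi^{[2]}_{n-1}$ given in Theorem \ref{teo:1F2} with the diagonal identity deduced from the compatibility condition \eqref{eq:compatibility_Jacobi_structure_a}, the latter already worked out just before the statement of this theorem.

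First I would derive the equation for $\gamma_{n+2}$. The formula for $\pi^{[2]}_{n-1}$ in Theorem \ref{teo:1F2} is linear in $\gamma_{n+2}$ through a factor $\gamma_{n+1}/(\eta+\gamma_{n+1})$, so solving it yields $\gamma_{n+2}$ rationally in $\pi^{[2]}_{n-1}$ and the other recursion data at indices $n,n+1$. To eliminate $\pi^{[2]}_{n-1}$ I would use the telescoping identity
\begin{align*}
\pi^{[2]}_{n-1}-\pi^{[2]}_{n-2}=n(1+\beta_{n-1}-\beta_n),
\end{align*}
which follows from the expression $\pi^{[2]}_n=\frac{(n+2)(n+1)}{2}-(n+1)\beta_{n+1}-p^{1}_{n+1}$ in \eqref{eq:pis} together with $\beta_n=p^{1}_n-p^{1}_{n+1}$ from \eqref{eq:equations0}. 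Now $\pi^{[2]}_{n-2}$ itself is given by the Theorem \ref{teo:1F2} formula at the shifted index $n\to n-1$, which introduces the denominator $\eta+\gamma_n$ and the variables $\gamma_{n-1},\gamma_n,\gamma_{n+1},\beta_{n-1},\beta_n$. Substituting and rearranging produces exactly the three visible contributions of the announced formula for $\gamma_{n+2}$: the telescoping piece $\frac{\eta+\gamma_{n+1}}{\gamma_{n+1}}n(\beta_{n-1}-\beta_n+1)$, the polynomial piece coming from the constant part of the Theorem \ref{teo:1F2} formula at index $n$, and the $(\eta+\gamma_n)^{-1}$-piece from the shifted formula at index $n-1$.

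For $\beta_{n+2}$, the compatibility identity on the first superdiagonal displayed immediately before Theorem \ref{teo:1F2} is \emph{linear} in the top shift $T_-^2\beta$, with single-factor coefficient $-T_-\gamma$ coming from the term $T_-\gamma(T_+D-2T_-\beta-T_-^2\beta+b_1+b_2+1)$. Evaluating this identity at the diagonal index that produces $\beta_{n+2}$ as the top shift, solving for $\beta_{n+2}$, and substituting the Theorem \ref{teo:1F2} expression for $\pi^{[2]}_{n-1}$ (into which the previously derived $\gamma_{n+2}$ now enters as a known input) yields the quoted formula: the cubic $\beta_n^3-\beta_{n+1}^3+\cdots$ is the expansion of $\beta^3-(T_-\beta)^3+(T_-\beta)^2+\beta^2+\beta T_-\beta$, the $n$- and $(b_1+b_2)$-linear parts come from the $T_+D$ and $T_+D^{[2]}$ factors, and the long bracket $(\beta_n-\beta_{n+1}+1)(\cdots)$ is $(\beta+1-T_-\beta)[2T_+D^{[2]}-T_+D(b_1+b_2)+b_1b_2-T_+\pi^{[2]}]$ after the rational substitution.

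The main obstacle is purely algebraic bookkeeping: carefully aligning the shift operators $T_\pm$ on every diagonal factor, correctly identifying the diagonal index at which the compatibility identity becomes linear in $\beta_{n+2}$ (an off-by-one phenomenon that must be handled consistently), and then collecting the resulting cubic, quadratic and linear terms in $\beta_n,\beta_{n+1},\beta_{n-1}$ after substituting the rational expression for $\pi^{[2]}_{n-1}$. Verifying that all signs and the shifted-factorial contributions from $T_+D$ and $T_+D^{[2]}$ combine into the exact displayed form is where most of the effort, and the main scope for error, lies.
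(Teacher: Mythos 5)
Your proposal is correct and follows essentially the same route as the paper: $\gamma_{n+2}$ is obtained by combining the telescoping identity $\pi^{[2]}_{n-1}-\pi^{[2]}_{n-2}=n(1+\beta_{n-1}-\beta_n)$ with the rational expression for $\pi^{[2]}$ from Theorem \ref{teo:1F2} at indices $n$ and $n-1$, and $\beta_{n+2}$ by solving the superdiagonal compatibility relation, which is linear in $\beta_{n+2}$ with coefficient $-\gamma_{n+2}$, and then substituting $\pi^{[2]}_{n-1}$. The only (harmless) deviation is that you justify the telescoping identity directly from \eqref{eq:pis} and \eqref{eq:equations0} rather than reading it off the compatibility $[\Psi H^{-1},J]=\Psi H^{-1}$ as the paper does; both derivations give the same identity.
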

\begin{proof}
	To get $\gamma_{n+2}=F_1(n,\gamma_{n+1},\gamma_n,\dots,\beta_{n+1},\beta_n,\dots)$ we consider the compatibility  $T_{+}\pi^{[2]}-T_{+}^2\pi^{[2]}=T_{+}D(T_{+}\beta+1-\beta)$ and solve  $\gamma_{n+2}$. Also,
we obtain $\beta_{n+2}=F_2(n,\gamma_{n+2},\gamma_{n+1},\dots,\beta_{n+1},\beta_n,\dots)$ by solving for $\beta_{n+2}$ in the first subdiagonal
of the first type of compatibility.
\end{proof}

\section{The  ${}_2 F_2$ hypergeometrical family}
Let us take $\sigma(z)=\eta(z+a_1)(z+a_2)$ and  $\theta(z)=z(z+b_1)(z+b_2)$ and consider the corresponding 
Pearson equation 
\begin{align*}
	(k+1)(k+1+b_1)(k+1+b_2)w(k+1)=\eta(k+a_1)(k+a_2)w(k)
\end{align*}
whose solutions are proportional to 
$w(z)=\frac{(a_1)_z(a_2)_z}{(b_1+1)_z(b_2+1)_z}\frac{\eta^z}{z!}$, so that he discrete orthogonality measure is 
\begin{align*}
\rho_z=\sum_{k=0}^\infty \frac{(a_1)_k(a_2)_k}{(b_1+1)_k(b_2+1)_k}\frac{\eta^k}{k!},
\end{align*}
with  first moment given by the  generalized  hypergeometric function:
$	\rho_0={}_2F_2\left[\!\!{\begin{array}{c}a_1,a_2 \\b_1+1,b_2+1\end{array}};\eta\right]$, and subsequent moments
$\rho_n=\vartheta_{\eta}^n {}_2F_2\left[\!\!{\begin{array}{c}a_1,a_2 \\b_1+1,b_2+1\end{array}};\eta\right]$. The corresponding moments exist for any $\eta\in\C$.
\begin{definition}
	Let us introduce 
	\begin{align*}
	\hspace*{-1cm}
	A_n:=\begin{multlined}[t][\textwidth]
	-\eta^2\Big(\frac{n(n+1)}{2}+\beta_{n-1}+\beta_n+\gamma_{n+2}+\gamma_{n+1}+(n+1)(\beta_{n+1}+a_1+a_2)+(\beta_{n+1}+a_1)(\beta_{n+1}+a_2)\Big)\\+\eta\Big(4\gamma_n+\gamma_{n+1}(a_1+a_2-b_1-b_2+2(n+1)+\beta_{n-1}-\beta_{n+1})+\gamma_{n+2}(n-1-2\beta_{n+1}-\beta_{n+2}-b_1-b_2)\\+\frac{n(n+1)}{2}(a_1+a_2-b_1-b_2-\beta_{n+1})+(\beta_{n+1}+b_1+b_2
	+a_1+a_2-n^2)(\beta_n+\beta_{n-1})+2(\beta^2_n+\beta^2_{n-1})\\-(\beta_{n+1}+b_1)(\beta_{n+1}+b_2)(\beta_{n+1}-n-1)\Big)\\-\gamma_{n+1}\Big(\frac{n(n-1)}{2}+\gamma_{n+2}+\gamma_{n+1}+\gamma_n+\beta_{n-1}+(\beta_{n+1}+b_1)(\beta_{n+1}+b_2)+(\beta_n-n)(\beta_{n+1}+\beta_n+b_1+b_2)\Big),
	\end{multlined}
	\end{align*}
	and
	\begin{align}	
	\label{eq:Bn}	B_n&:=\eta^2+\eta(2\beta_{n-1}+2\beta_n+\beta_{n+1}+a_1+a_2+b_1+b_2+2n)+\gamma_{n+1}.
	\end{align}
\end{definition}

\begin{theorem}[The generalized Laguerre--Freud  structure matrix]\label{teo:Hahn}
	For $\sigma(z)=\eta(z+a_1)(z+a_2)$ and  $\theta(z)=z(z+c_1)(z+c_2)$  
	we find for the subleading coefficients the following expression
\begin{align}\label{eq:pn}
	p^1_{n-1}&=\frac{A_n(\eta,\beta_{n-1},\beta_n,\beta_{n+1},\beta_{n+2},\gamma_n,\gamma_{n+1},\gamma_{n+2})}{B_n(\eta,\beta_{n-1},\beta_n,\beta_{n+1},\gamma_{n+1})}.
\end{align}

The Laguerre--Freud structure matrix is the following hexadiagonal matrix
\begin{align*}
	\Psi=\left[\begin{NiceMatrix}[
		small]
		\psi^{(0)}_0 &\psi^{(1)}_0 &\psi^{(2)}_0&H_3&0 &\Cdots&&\\
		\psi^{(-1)}_0 & \psi^{(0)}_1 &\psi^{(1)}_1&\psi^{(2)}_1& H_4&\Ddots&&\\[4pt]
		\eta H_2& \psi^{(-1)}_1 &\psi^{(0)}_2&\psi^{(1)}_2& \psi^{(2)}_2& H_5&&\\[4pt]
		0&	\eta H_ 3& \psi^{(-1)}_2 &\psi^{(0)}_3&\psi^{(1)}_3& \psi^{(2)}_3& H_6&\\
		\Vdots&\Ddots &\Ddots&\Ddots&\Ddots&\Ddots&\Ddots&\Ddots\\
	\end{NiceMatrix}\right],
	\end{align*}
with
\begin{align*}
	\psi^{(2)}_n&=(\beta_n+\beta_{n+1}+\beta_{n+2}+b_1+b_2-n)H_{n+2},\\
	\psi^{(0)}_n&=\eta \Big(\frac{n(n-1)}{2}+\beta_{n-1}+n(\beta_n+a_1+a_2)+\gamma_n+\gamma_{n+1}+(\beta_n+a_1)(\beta_n+a_2)-p^1_{n-1}\Big)H_n,\\
\psi^{(1)}_n&=\begin{multlined}[t][.8\textwidth]
\Big(\frac{n(n-1)}{2}+\gamma_n+\gamma_{n+1}+\gamma_{n+2}+(\beta_{n+1}+b_1)(\beta_{n+1}+b_2)\\+(\beta_n-n)(\beta_n+\beta_{n+1}+b_1+b_2)-\beta_{n-1}+p^1_{n-1}\Big)H_{n+1},
\end{multlined}\\
	\psi^{(-1)}_n&=\eta (\beta_n+\beta_{n+1}+a_1+a_2+n)H_{n+1}.
\end{align*}
\end{theorem}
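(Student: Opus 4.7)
The plan is to mirror the derivation for the ${}_1F_2$ case (Theorem~\ref{teo:1F2}). Since here $M=\deg\sigma=2$ and $N+1=\deg\theta=3$, the Laguerre--Freud structure theorem already guarantees that
\[
\Psi=(\Lambda^\top)^2\psi^{(-2)}+\Lambda^\top\psi^{(-1)}+\psi^{(0)}+\psi^{(1)}\Lambda+\psi^{(2)}\Lambda^2+\psi^{(3)}\Lambda^3,
\]
so the hexadiagonal profile is automatic, and the problem reduces to computing the six diagonal matrices $\psi^{(k)}$ and, along the way, extracting the explicit expression for the subleading coefficient $p^1_{n-1}$.

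I would expand both representations in \eqref{eq:Psi}--\eqref{eq:Psi2} in parallel. From $\Psi=\sigma(J)H\Pi^\top$, with $\sigma(J)=\eta(J+a_1)(J+a_2)$ pentadiagonal and $H\Pi^\top$ upper triangular with diagonal $H$ and superdiagonals controlled by $\pi^{[1]}=D$ and $\pi^{[2]}$ as in \eqref{eq:pis}, one reads off at once $\psi^{(-2)}=\eta T_-^2 H$, together with closed formulas for $\psi^{(-1)}_n$ and $\psi^{(0)}_n$ in terms of $\beta$, $\gamma$, $a_1,a_2$ and $p^1$ (the latter entering only through $\pi^{[2]}$). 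Dually, from $\Psi=\Pi^{-1}H\theta(J^\top)$ one obtains $\psi^{(3)}=T_-^3 H$, explicit formulas for $\psi^{(2)}_n$ and $\psi^{(1)}_n$ in the spirit of the ${}_1F_2$ computation, and an expression for $\psi^{(0)}_n$ that involves $T_+^2\pi^{[-2]}$ and $T_+^3\pi^{[-3]}$. Using the parity identities \eqref{eq:pis2} --- which in particular express $\pi^{[-3]}$ in terms of $\pi^{[3]}$, $D^{[2]}$ and $S^{[1]}$, the first subdiagonal of $S$, i.e. $S^{[1]}_n=p^1_{n+1}$ --- converts the second representation to the same unknowns $p^1$ used in the first.

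Equating the two expressions for the main diagonal $\psi^{(0)}$ (or, equivalently, for $\psi^{(1)}$) yields a single scalar linear equation in which the coefficient of $p^1_{n-1}$ is precisely the quadratic-in-$\eta$ quantity $B_n$ of \eqref{eq:Bn}, while the remaining contributions assemble into $A_n$; solving produces \eqref{eq:pn}. Substituting this value of $p^1_{n-1}$ back into the formulas obtained in the first step delivers the announced closed forms for $\psi^{(-1)}_n$, $\psi^{(0)}_n$, $\psi^{(1)}_n$ and $\psi^{(2)}_n$. The main obstacle is purely computational: the cubic degree of $\theta$ forces the appearance of $\pi^{[-3]}$ and hence of $S^{[1]}$, so substituting \eqref{eq:pis}--\eqref{eq:pis2} into the two representations involves several $T_\pm$-shifts of $\beta$'s, $\gamma$'s and $p^1$'s, and the bookkeeping needed to recognise the numerator $A_n$ and the denominator $B_n$ in their stated compact form requires careful tracking of the terms linear in $\beta_{n-1}$ and $\beta_{n+1}$ that appear only through $\sigma(J)$.
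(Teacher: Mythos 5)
Your overall strategy is the right one and matches the paper's: expand $\Psi=\sigma(J)H\Pi^\top$ and $\Psi=\Pi^{-1}H\theta(J^\top)$ in parallel, read off the extreme diagonals for free, and equate the two representations of the middle diagonals to pin down the unknown $\pi$'s. But there is a genuine gap at the decisive step. You claim that equating the two expressions for $\psi^{(0)}$ \emph{or, equivalently, for} $\psi^{(1)}$ yields a \emph{single} scalar linear equation in $p^1_{n-1}$ with coefficient $B_n$. That is not so: each of these two equations contains a second unknown, the coefficient $p^2_n$. Indeed, the $\theta$-side expression for $\psi^{(0)}$ involves $T_+^3\pi^{[-3]}$ and the $\sigma$-side expression for $\psi^{(1)}$ involves $T_+^2\pi^{[3]}$, and by \eqref{eq:piss} the entries of $\pi^{[\pm3]}$ contain the terms $\pm(n+1)p^2_{n+3}\mp(n+3)p^2_{n+2}$. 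The parity identity \eqref{eq:pis2} only trades $\pi^{[-3]}$ for $\pi^{[3]}$ plus quantities built from $S^{[1]}$ (i.e.\ from $p^1$); it does not remove the $p^2$ content, since $\pi^{[3]}$ itself carries it. So neither single equation determines $p^1_{n-1}$.

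The paper's proof handles this by treating the two equalities (the one from $\psi^{(0)}$ and the one from $\psi^{(1)}$) as a genuine $2\times2$ system in the unknowns $p^1_{n-1}$ and $p^2_n$ (after using $p^1_{n}=p^1_{n-1}-\beta_{n-1}$, $p^2_{n+1}=p^2_n-\gamma_n-\beta_np^1_{n-1}+\beta_n\beta_{n-1}$ and a shift $n\to n+1$ to reduce all occurrences to these two): it solves the $\psi^{(0)}$-equation for $p^2_n$ and substitutes into the $\psi^{(1)}$-equation, and only this elimination produces the linear relation with denominator $B_n$ and numerator $A_n$, i.e.\ \eqref{eq:pn}. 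A second, smaller point: once $p^1_{n-1}$ is known one should take $\psi^{(0)}$ from the $\sigma$-side formula (which involves only $\pi^{[2]}$) and $\psi^{(1)}$ from the $\theta$-side formula (which involves only $\pi^{[-2]}$); this is what allows the final entries to be written in terms of $p^1_{n-1}$ alone without ever computing $p^2_n$. Your proposal, which substitutes back into "the formulas obtained in the first step", would reintroduce $\pi^{[3]}$ and hence $p^2_n$ into $\psi^{(1)}$.
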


\begin{proof}
We  write \eqref{eq:Psi} and \eqref{eq:Psi2} as
\begin{align} 
	\label{eqref:fl1}\Psi=\sigma(J)H\Pi^{\top}, \\ 
	\label{eqref:fl2} \Psi=\Pi^{-1}H\theta(J^{\top}). 
\end{align}
The Laguerre--Freud matrix $\Psi$ is a hexadiagonal matrix
\begin{align*}
	\Psi=(\Lambda^{\top})^2\psi^{(-2)}+\Lambda^{\top}\psi^{(-1)}+\psi^{(0)}+\psi^{(1)}\Lambda+\psi^{(2)}\Lambda^2+\psi^{(3)}\Lambda^3,
\end{align*} with  $\psi^{(n)}$ diagonal matrices.
From Equation \eqref{eqref:fl1} we get
\begin{align*} \psi^{(-2)}&=\eta T^2_{-}H ,&
	 \psi^{(-1)}&=\eta T_{-}H(T_{+}D+\beta+T_{-}\beta+a_1+a_2).
 \end{align*}
From Equation  \eqref{eqref:fl2} we obtain
\begin{align*} 
	\psi^{(3)}&=T^3_{-}H, &
	\psi^{(2)}&=T_{-}^2H(T^2_{-}\beta+\beta+T_{-}\beta-T_{+}D+b_1+b_2).
\end{align*}

We also have expressions for the diagonal matrices $\psi^{(0)},\psi^{(1)}$ in terms of 
$\pi^{[\pm 2]},\pi^{[\pm3]}$. From  Equation \eqref{eqref:fl1} we get
\begin{align}\label{eqref:psi0a}\psi^{(0)}&= \eta H[T^2_{+}\pi^{[2]}+T_{+}D(T_{+}\beta+\beta+a_1+a_2)+\gamma+T_{+}\gamma+(\beta+a_1)(\beta+a_2)],\\
\label{eqref:psi1a}\psi^{(1)}&=\eta H[\gamma(\beta+T_{-}\beta+a_1+a_2)+D(\gamma+T_{+}\gamma+(\beta+a_1)(\beta+a_2))+T_{+}^2\pi^{[3]}+T_{+}\pi^{[2]}(T_{+}\beta+\beta+a_1+a_2)],\end{align}   
and Equation \eqref{eqref:fl2} leads to
\begin{align}\label{eqref:psi0b}
	\psi^{(0)}&=
	\begin{multlined}[t][0.8\textwidth]
	H[(\beta+b_1)(\beta+b_2)(\beta-T_{+}D)+\gamma(2\beta+T_{-}\beta+b_1+b_2-T_{+}D)+T_{+}\gamma(2\beta+T_{+}\beta+b_1+b_2-T_{+}D)\\-T_{+}D[T_{+}^2\gamma+T_{+}\beta(\beta+T_{+}\beta+b_1+b_2)]+T_{+}^2\pi^{[-2]}(\beta+T_{+}\beta+T_{+}^2\beta+b_1+b_2)-T_{+}^3\pi^{[-3]}],
	\end{multlined}\\
\label{eqref:psi1b}
\psi^{(1)}&=
\begin{multlined}[t][0.9\textwidth]
	T_{-}H[T_{+}^2\pi^{[-2]}-T_{+}D(T_{-}\beta+\beta+T_{+}\beta+b_1+b_2)+T_{-}\gamma+\gamma+T_{+}\gamma+(T_{-}\beta+b_1)(T_{-}\beta+b_2)\\+\beta(T_{-}\beta+\beta+b_1+b_2)].
\end{multlined}\end{align}
We get a system of two equations by equating the RHS of \eqref{eqref:psi0a} and \eqref{eqref:psi0b} and the RHS of \eqref{eqref:psi1a} and  \eqref{eqref:psi1b}. Using the shift   $n \rightarrow n+1$ in the first relation obtained   from the two possible expressions for  $\psi^{(0)}$,  we now  show that the $\pi$ matrices involved in the system 
can be expressed uniquely in terms of $p^2_n,p^1_{n-1}$.
From the relations
\begin{align} 
	p^1_{n}&=p^1_{n-1}-\beta_{n-1} ,\\ p^1_{n+1}&=p^1_{n-1}-\beta_{n-1}-\beta_n, \\ p^2_{n+1}&=p^2_n-\gamma_n-\beta_np^1_{n-1}+\beta_n\beta_{n-1}, 
\end{align}
we get 
\begin{align*}\pi^{[\pm 2]}_{n-1}&=\frac{n(n+1)}{2}\mp( n\beta_n+p^1_{n-1}-\beta_{n-1}),\\\pi^{[\pm 2]}_{n-2}&=\frac{n(n-1)}{2}\mp(n-1)\beta_{n-1}\mp p^1_{n-1},\\
	\pi^{[\pm3]}_{n-2}&=
	\begin{multlined}[t][.8\textwidth]
	\pm\frac{n(n+1)(n-1)}{3}-\frac{n(n-1)}{2}(\beta_{n-1}+\beta_n)\pm(n-1)(\beta_n\beta_{n-1}-\gamma_n)\\+p^1_{n-1}(\pm\beta_n\mp\beta_{n-1}-n)+\pm(p^1_{n-1})^2\mp2p^2_n.
	\end{multlined}
\end{align*}

Then, we  solve the equation for  $p^2_n$ in the equation obtained for the two expressions \eqref{eqref:psi0a} and \eqref{eqref:psi0b} for  $\psi^{(0)}$ and substitute in the Equation gotten from the two expressions \eqref{eqref:psi1a} and \eqref{eqref:psi1b}  for $\psi^{(1)}$ to deduce
\begin{multline}\label{p1}
p^1_{n-1}
\big(\eta^2+\eta(2\beta_{n-1}+2\beta_n+\beta_{n+1}+a_1+a_2+b_1+b_2+2n)+\gamma_{n+1}
\big)\\=-\eta^2
\Big(\frac{n(n+1)}{2}+\beta_{n-1}+\beta_n+\gamma_{n+2}+\gamma_{n+1}+(n+1)(\beta_{n+1}+a_1+a_2)\\
+(\beta_{n+1}+a_1)(\beta_{n+1}+a_2)\Big)\\+\eta
\Big(\gamma_{n+1}(a_1+a_2-b_1-b_2+2(n+1)+\beta_{n-1}-\beta_{n+1})+\gamma_{n+2}(n-1-2\beta_{n+1}-\beta_{n+2}-b_1-b_2)\\+n(\beta_{n-1}+\beta_n)+(a_1+a_2)\beta_{n-1}+4\gamma_n+2(\beta^2_n+\beta^2_{n-1})+\beta_n(\beta_{n+1}+a_1+a_2+b_1+b_2)\\
+(n+1)a_1a_2+\beta_{n-1}(\beta_{n+1}+b_1+b_2)-(\beta_{n+1}+b_1)
(\beta_{n+1}+b_2)(\beta_{n+1}-n-1)\\
+\frac{n(n+1)}{2}(a_1+a_2-b_1-b_2-\beta_{n+1})-n^2(\beta_n+\beta_{n-1})\Big)
\\-\gamma_{n+1}
\Big(\frac{n(n-1)}{2}+\gamma_{n+2}+\gamma_{n+1}+\gamma_n+\beta_{n-1}-n(\beta_{n+1}+\beta_n+b_1+b_2)+(\beta_{n+1}+b_1)(\beta_{n+1}+b_2)\\+\beta_n(\beta_{n+1}+\beta_n+b_1+b_2)\Big).
\end{multline}
That is, we have proven Equation \eqref{eq:pn} for $p^1_n$.
It is not necessary at this point to find  $p^2_n$ in order to get the Freud--Laguerre matrix, as we can get $\psi^{(0)}$ from \eqref{eqref:psi0a} and $\psi^{(1)}$ from \eqref{eqref:psi1b}. The entries in the diagonal matrices $\psi^{(0)},\psi^{(1)}$ can be written in terms of $p^1_{n-1}$ as:
\begin{align*}
	\psi^{(0)}_n&=\eta H_n\Big(\frac{n(n-1)}{2}+\beta_{n-1}+n(\beta_n+a_1+a_2)+\gamma_n+\gamma_{n+1}+(\beta_n+a_1)(\beta_n+a_2)-p^1_{n-1}\Big),\\
\psi^{(1)}_n&=
\begin{multlined}[t][.8\textwidth]
\gamma_{n+1}H_n\Big(\frac{n(n-1)}{2}+\gamma_{n+1}+\gamma_{n+2}+\gamma_n+(\beta_{n+1}+b_1)(\beta_{n+1}+b_2)\\+(\beta_n-n)(\beta_n+\beta_{n+1}+b_1+b_2)-\beta_{n-1}+p^1_{n-1}\Big).
\end{multlined}
\end{align*}
\end{proof}

\begin{definition}
	Let us consider	
	\begin{align}\label{eq:C}
	C_n&:=\begin{multlined}[t][.95\textwidth]
	\eta\big(
	(n+\beta_n+\beta_{n+1}+a_1+a_2)(\beta_n-\beta_{n+1}-1)-\beta_n-\beta_{n+1}+\gamma_n\big)\\-n(\beta_{n+1}-\beta_n-1)(\beta_{n+1}+\beta_n+\beta_{n-1})
	-\beta_n(\beta_{n+1}+\beta_n+b_1+b_2)\\-\gamma_n(\beta_{n-1}+2\beta_n+b_1+b_2-n+2)+\gamma_{n+2}(2\beta_{n+1}+b_1+b_2-n-1),
	\end{multlined}\\\label{eq:D}
	D_n&:=\frac{n(n-1)}{2}+\beta^2_{n+1}+(\beta_{n+1}-n)(b_1+b_2)+b_1b_2
	+(n-1)\beta_{n-1}+\gamma_{n+1},
	\end{align}
	and
		\begin{multline}
	\label{eq:E}	E_n:=-\eta\Big(\frac{n(n-1)}{2}+\beta_{n-1}+n(\beta_n+a_1+a_2)+(\beta_n+a_1)(\beta_n+a_2)\\
	-\gamma_n(n-2+\beta_{n-1}+\beta_n+a_1+a_2)+\gamma_{n+1}(n+1+\beta_n+\beta_{n+1}+a_1+a_2)\Big)\\+
	\gamma_{n+1}\Big(\frac{n(n-1)}{2}+\gamma_{n+1}-\beta_{n-1}+(\beta_n-n)(\beta_{n+1}+\beta_n+b_1+b_2)+(\beta_{n+1}+b_1)(\beta_{n+1}+b_2)\Big)\\
	-\gamma_n\Big(\frac{(n-1)(n-2)}{2}+\gamma_n+\gamma_{n-1}+(\beta_n+b_1)(\beta_n+b_2)+(\beta_{n-1}-n+1)(\beta_n+\beta_{n-1}+b_1+b_2)\Big)
	\end{multline}
\end{definition}

\begin{theorem}[Consequences of compatibility conditions]
	The compatibility conditions lead to   two alternative relations expressions for the subleading coefficient $p^1_{n-1}$ of the orthogonal polynomials:
	\begin{align}\label{eq:compatibility_4_1}
	p^1_{n-1}&=\frac{
		C_n(\eta,\beta_{n-1},\beta_n,\beta_{n+1},\gamma_n)
		+\gamma_{n+2}(\beta_{n+2}-\eta)
	}{1+\beta_n-\beta_{n+1}}-D_n(\beta_{n-1},\beta_{n+1},\gamma_{n+1}),\\
\label{eq:compatibility_4_2}
p^1_{n-1}&=\frac{E_n(\eta, \beta_{n-1},\beta_n,\beta_{n+1},\gamma_{n-1},\gamma_n,\gamma_{n+1})+\gamma_{n+1}\gamma_{n+2}}{\gamma_n-\gamma_{n+1}-\eta}.
\end{align}

	where $p^{1}_{n}$ is given in Equation \eqref{eq:pn}.
\end{theorem}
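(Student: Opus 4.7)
The plan is to extract both identities from the compatibility relations \eqref{eq:compatibility_Jacobi_structure_a}--\eqref{eq:compatibility_Jacobi_structure_b} applied to the hexadiagonal matrix $\Psi$ of Theorem \ref{teo:Hahn}. Writing $J = \Lambda^\top\gamma + \beta + \Lambda$ and using the shift identities $A\Lambda = \Lambda(T_+A)$ and $\Lambda A = (T_-A)\Lambda$, I would expand the commutator $[\Psi H^{-1}, J]$ as a banded matrix (whose span exceeds that of $\Psi H^{-1}$ by one diagonal on each side) and match it against $\Psi H^{-1}$ diagonal by diagonal. The two outermost diagonals of the commutator must vanish, and the corresponding identities are already forced by the defining equations $\Psi = \sigma(J)H\Pi^\top = \Pi^{-1}H\theta(J^\top)$, so they carry no new content. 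The decisive equations come from the intermediate diagonals where the diagonal matrices $\psi^{(0)}$ and $\psi^{(1)}$ appear, since by Theorem \ref{teo:Hahn} these are the only entries of $\Psi$ that contain the unknown $p^1_{n-1}$, and it enters each of them linearly.

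First I would concentrate on the scalar equation arising on the subdiagonal of $[\Psi H^{-1}, J] = \Psi H^{-1}$. After carrying out the commutator expansion, the contributions from $[\psi^{(0)} H^{-1},\Lambda]$ and $[\psi^{(1)}\Lambda H^{-1},\beta]$ produce a coefficient of $p^1_{n-1}$ that collapses to $1+\beta_n-\beta_{n+1}$; the terms with $\beta_{n+2}$ arise only through $[\psi^{(2)}\Lambda^2 H^{-1},\beta]$ and combine into $\gamma_{n+2}(\beta_{n+2}-\eta)$; and the remaining polynomial expressions in $\eta$, $\beta_j$, $\gamma_j$ with $j\in\{n-1,n,n+1\}$ reorganize exactly according to the definitions \eqref{eq:C} and \eqref{eq:D} of $C_n$ and $D_n$. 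Solving for $p^1_{n-1}$ yields \eqref{eq:compatibility_4_1}. For the second relation I would turn to a different intermediate diagonal (a superdiagonal of the same compatibility, equivalently a diagonal of $[J,\Psi^\top H^{-1}]=\Psi^\top H^{-1}$), where the $\Lambda^\top\gamma$ piece of $J$ governs the leading behavior. Now the coefficient of $p^1_{n-1}$ collapses to $\gamma_n-\gamma_{n+1}-\eta$, while the surviving terms, together with a $\gamma_{n+1}\gamma_{n+2}$ contribution produced when $\psi^{(2)}\Lambda^2 H^{-1}$ is commuted with $\Lambda^\top\gamma$, assemble into $E_n+\gamma_{n+1}\gamma_{n+2}$ with $E_n$ as in \eqref{eq:E}, yielding \eqref{eq:compatibility_4_2}.

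The main obstacle is computational rather than conceptual: one has to align the shifts $T_\pm$ consistently when expanding $[\psi^{(j)}\Lambda^j H^{-1}, J]$ for each $j\in\{-2,-1,0,1,2,3\}$ and then verify that the coefficients of $p^1_{n-1}$ on two different diagonals really factor as the claimed simple linear expressions $1+\beta_n-\beta_{n+1}$ and $\gamma_n-\gamma_{n+1}-\eta$. The strategy is the direct analogue of what was used in the proof of Theorem \ref{teo:1F2}; only the number of terms grows because $\Psi$ is now hexadiagonal rather than pentadiagonal. The resulting identities \eqref{eq:compatibility_4_1} and \eqref{eq:compatibility_4_2} are genuinely independent from \eqref{eq:pn}, and cross-multiplying pairs among these three expressions to eliminate $p^1_{n-1}$ is precisely what feeds into the Laguerre--Freud equations for the recursion coefficients $\{\beta_n,\gamma_n\}$ of the ${}_2F_2$ family.
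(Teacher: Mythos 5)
Your overall strategy---expand the compatibility $[\Psi H^{-1},J]=\Psi H^{-1}$ of the hexadiagonal Laguerre--Freud matrix diagonal by diagonal, discard the diagonals that only reproduce known identities, and solve the remaining scalar equations for $p^1_{n-1}$, which enters linearly through the $\pi^{[\pm2]}$, $\pi^{[\pm3]}$ entries---is exactly the paper's. But your assignment of the two relations to specific diagonals is wrong, and on the diagonals you choose the computation would not close. On the first \emph{subdiagonal} the equation reduces to $\pi^{[2]}_{n-1}-\pi^{[2]}_{n-2}=n(1-\beta_n+\beta_{n-1})$; since $\pi^{[2]}_{n-1}$ and $\pi^{[2]}_{n-2}$ each contain $-p^1_{n-1}$, the subleading coefficient cancels identically in the difference, so its coefficient there is $0$, not $1+\beta_n-\beta_{n+1}$, and \eqref{eq:compatibility_4_1} cannot be extracted from the subdiagonal. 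Relatedly, $[\psi^{(2)}\Lambda^2H^{-1},\beta]$ stays on the second superdiagonal (commuting with a diagonal matrix does not move a band), so it cannot feed $\beta_{n+2}$ into a subdiagonal equation; the $\gamma_{n+2}(\beta_{n+2}-\eta)$ contribution actually comes from $[\psi^{(2)}\Lambda^2H^{-1},\Lambda^\top\gamma]$ dropping one step down onto the \emph{first superdiagonal}.

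The paper's bookkeeping is: the second superdiagonal and first subdiagonal give only the trivial identities on $\pi^{[\pm2]}$ already implied by their explicit formulas; the \emph{first superdiagonal} gives \eqref{eq:compatibility_4_1}, with the coefficient $1+\beta_n-\beta_{n+1}$ of $p^1_{n-1}$ produced by the $T_+^2\pi^{[-2]}$ term of $M^{(1)}$ against the $(T_-\beta-\beta)\,T_+^2\pi^{[-2]}$ term of the commutator; and the \emph{main diagonal} gives \eqref{eq:compatibility_4_2}, with coefficient $\gamma_n-\gamma_{n+1}-\eta$ (the $-\eta$ coming from $\eta T_+^2\pi^{[2]}$ in $M^{(0)}$ and the $\gamma$'s from the $\gamma$- and $T_+\gamma$-weighted $\pi^{[-2]}$, $\pi^{[-3]}$ terms of $\tilde M^{(0)}$). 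To carry either computation out you also need the explicit expressions of $\pi^{[\pm2]}_{n-1},\pi^{[\pm2]}_{n-2},\pi^{[\pm3]}_{n-2}$ in terms of $p^1_{n-1}$ and $p^2_n$ established in the proof of Theorem \ref{teo:Hahn}; without invoking them, the ``collapse'' of the coefficients you assert cannot be verified. So the plan is salvageable, but as written both key steps point at the wrong diagonals and would fail there.
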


\begin{proof}
	We compute explicitly the compatibility equation $[\Psi H^{-1},J]=\Psi H^{-1}$. In the one hand $\Psi H^{-1}$
	can be expressed as
	\begin{align*}
		\Psi H^{-1}=(\Lambda^{\top})^2M^{(-2)}+\Lambda^{\top}M^{(-1)}+M^{(0)}+M^{(1)}\Lambda+M^{(2)}\Lambda^2+M^{(3)}\Lambda^3
	\end{align*}
with $M^{(n)}$ being the following diagonal matrices
	\begin{align*}
		M^{(3)}&=I,\\
M^{(2)}&=T_{-}^2\beta+T_{-}\beta+\beta-T_{+}D+b_1+b_2,\\
M^{(1)}&=\begin{multlined}[t][.9\textwidth]
T_{+}^2\pi^{[-2]}-T_{+}D(T_{-}\beta+\beta+T_{+}\beta+b_1+b_2)+T_{-}\gamma+\gamma+T_{+}\gamma+(T_{-}\beta+b_1)(T_{-}\beta+b_2)\\+\beta(T_{-}\beta+\beta+b_1+b_2),
\end{multlined}\\
M^{(0)}&=\eta\Big(T_{+}^2\pi^{[2]}+T_{+}D(T_{+}\beta+\beta+a_1+a_2)+\gamma+T_{+}\gamma+(\beta+a_1)(\beta+a_2)\Big),
\end{align*}
or alternatively
\begin{align*}
	M^{(0)}&=\begin{multlined}[t][.9\textwidth]\beta^3+(b_1+b_2)\beta^2+b_1b_2\beta-T_{+}D[\beta^2+(b_1+b_2)\beta+b_1b_2+T_{+}\beta(\beta+T_{+}\beta+b_1+b_2)]\\+\gamma(2\beta+T_{-}\beta+b_1+b_2-T_{+}D)+T_{+}\gamma(2\beta+T_{+}\beta+b_1+b_2-T_{+}D)\\-T_{+}^2\gamma T_{+}D+T_{+}^2\pi^{[-2]}(\beta+T_{+}\beta+T_{+}^2\beta+b_1+b_2)-T_{+}^3\pi^{[-3]},\end{multlined}\\
M^{(-1)}&=\eta\gamma(T_{+}D+\beta+T_{-}\beta+a_1+a_2),\\
M^{(-2)}&=\eta\gamma( T_{-}\gamma).
\end{align*}
In the other hand, the commutator can be written as follows
	\begin{align*}[\Psi H^{-1},J]=(\Lambda^{\top})^2\tilde{M}^{(-2)}+\Lambda^{\top}\tilde{M}^{(-1)}+\tilde{M}^{(0)}+\tilde{M}^{(1)}\Lambda+\tilde{M}^{(2)}\Lambda^2+\tilde{M}^{(3)}\Lambda^3,
	\end{align*}
where the diagonal matrices $\tilde{M}^{(n)}$ are
	\begin{align*}\tilde{M}^{(3)}&=I,\\
\tilde{M}^{(2)}&=T_{+}^2\pi^{[-2]}-T_{+}\pi^{[-2]}+T_{+}D(\beta-T_{+}\beta)+T_{-}^2\beta+T_{-}\beta+\beta+b_1+b_2,\\
\tilde{M}^{(1)}&=\begin{multlined}[t][.8\textwidth]\eta\big(T_{+}^2\pi^{[2]}-T_{+}\pi^{[2]}-(\beta+a_1+a_2)+T_{+}(D\beta)-DT_{-}\beta\big)\\+(T_{-}\beta-\beta)\big(T_{+}^2\pi^{[-2]}-T_{+}D(T_{+}\beta+b_1+b_2)+
	\gamma+(T_{-}\beta+b_1)(T_{-}\beta+b_2)
	-\eta(a_1+a_2)\big)\\+(\beta^2-(T_{-}\beta)^2)(T_{+}D+\eta)+T_{-}\gamma(T_{-}^2\beta+2T_{-}\beta-T_{+}D+b_1+b_2-\eta)
	\\-T_{+}\gamma(2\beta+T_{+}\beta-T_{+}^2D+b_1+b_2-\eta),\end{multlined}\\
\tilde{M}^{(0)}&=\begin{multlined}[t][.85\textwidth]\eta T_{+}\gamma\big(T_{+}^2D+T_{+}\beta+\beta+a_1+a_2\big)\\+\gamma\big(T_{+}^2\pi^{[-2]}-T_{+}D(T_{-}\beta+\beta+T_{+}\beta+b_1+b_2)
	+T_{-}\gamma+\gamma+(T_{-}\beta+b_1)(T_{-}\beta+b_2)
\\	+\beta(T_{-}\beta+\beta+b_1+b_2)\big)-T_{+}\gamma\big(T_{+}^3\pi^{[-2]}-T_{+}^2D(\beta+T_{+}\beta+T_{+}^2\beta+b_1+b_2)\\+T_{+}\gamma+T_{+}^2\gamma+(\beta+b_1)(\beta+b_2)\T_{+}\beta(\beta+T_{+}\beta+b_1+b_2)-\eta\gamma(T_{+}D+\beta+T_{-}\beta+a_1+a_2),\end{multlined}\\
\tilde{M}^{(-1)}&=\eta\gamma(T_{+}\pi^{[2]}-T_{+}^2\pi^{[2]}+T_{+}D(-T_{-}\beta-T_{+}\beta)+D(\beta+T_{-}\beta)+(a_1+a_2)),\\
\tilde{M}^{(-2)}&=\eta\gamma(T_{-}\gamma).
\end{align*}
	From the equations derived from second superdiagonal and first  subdiagonal one obtains, respectively, the following equations
	\begin{align*}T^2_{+}\pi^{[-2]}-T_{+}\pi^{[-2]}&=T_{+}D(T_{+}\beta-\beta-1),&
		T_{+}\pi^{[2]}-T_{+}^2\pi^{[2]}&=T_{+}D(1-\beta+T_{+}\beta).\end{align*}
	Hence , they can be gotten directly from the expressions for the diagonal matrices $\pi^{[\pm2]}$.
	
From the first superdiagonal we get \eqref{eq:compatibility_4_1}
and using the alternative expression of the main diagonal it can be seen that it corresponds to the identity: 
	\begin{align*}
		\pi^{[-3]}_{n-2}-\pi^{[-3]}_{n-3}=\pi^{[-2]}_{n-2}(1+\beta_n-\beta_{n-2})-(n-1)\gamma_n+n\gamma_{n-1}.
	\end{align*}
	Compatibility in the main diagonal is \eqref{eq:compatibility_4_2}.
\end{proof}

%

We discuss now about Laguerre--Freud equations \eqref{eq:LF}.

\begin{definition}
	Let us introduce 
\begin{align}\label{eq:hatA}
\hat A_n&:=	\begin{multlined}[t][.9\textwidth]	-\eta^2\Big(\frac{n(n+1)}{2}+\beta_{n-1}+\beta_n+\gamma_{n+1}+(n+1)(\beta_{n+1}+a_1+a_2)+(\beta_{n+1}+a_1)(\beta_{n+1}+a_2)\Big)\\
+\eta\Big(4\gamma_n+\gamma_{n+1}(a_1+a_2-b_1-b_2+2(n+1)+\beta_{n-1}-\beta_{n+1})\\+\frac{n(n+1)}{2}(a_1+a_2-b_1-b_2-\beta_{n+1})+(\beta_{n+1}+b_1+b_2
+a_1+a_2-n^2)(\beta_n+\beta_{n-1})+2(\beta^2_n+\beta^2_{n-1})\\-(\beta_{n+1}+b_1)(\beta_{n+1}+b_2)(\beta_{n+1}-n-1)\Big)\\-\gamma_{n+1}\Big(\frac{n(n-1)}{2}+\gamma_{n+1}+\gamma_n+\beta_{n-1}+(\beta_{n+1}+b_1)(\beta_{n+1}+b_2)+(\beta_n-n)(\beta_{n+1}+\beta_n+b_1+b_2)\Big),
\end{multlined}\\\label{eq:F}
F_n&:=-\frac{E_n}{\gamma_n-\gamma_{n+1}-\eta}
+\frac{C_n}{1+\beta_n-\beta_{n+1}}-D_n+
\frac{	1}{\eta(1+\beta_n-\beta_{n+1})}\Big(\hat A_n
-	\frac{E_n}{\gamma_n-\gamma_{n+1}-\eta}
B_n\Big),\\\label{eq:G}
G_n&:=\frac{\gamma_{n+1}}{\gamma_n-\gamma_{n+1}-\eta}+\frac{\eta}{1+\beta_n-\beta_{n+1}}+\frac{\eta^2-\eta(n-1-2\beta_{n+1}-b_1-b_2)+\gamma_{n+1}	+\frac{\gamma_{n+1}B_n}{\gamma_n-\gamma_{n+1}-\eta}}{\eta(1+\beta_n-\beta_{n+1})}.
\end{align}
\end{definition}

Then, we have the following result
\begin{theorem}[Laguerre--Freud equations]
Given the  functions $\hat A_n,B_n,C_n, D_n,E_n,F_n$ and $G_n$, as in \eqref{eq:hatA}, \eqref{eq:Bn},\eqref{eq:C},\eqref{eq:D},\eqref{eq:E},\eqref{eq:F} and \eqref{eq:G}, respectively,  the following nonlinear equations of Laguerre--Freud type are satisfied
	\begin{align}
	\label{eq:LFbeta}
\beta_{n+2}&=\begin{multlined}[t][\textwidth]\frac{	
\hat A_n(\eta,\beta_{n-1},\beta_n,\beta_{n+1},\gamma_n,\gamma_{n+1})-
(\eta^2-\eta(n-1-2\beta_{n+1}-b_1-b_2)+\gamma_{n+1})\gamma_{n+2}}{{\eta\gamma_{n+2}}}\\-	\frac{E_n(\eta, \beta_{n-1},\beta_n,\beta_{n+1},\gamma_{n-1},\gamma_n,\gamma_{n+1})+\gamma_{n+1}\gamma_{n+2}}{\gamma_n-\gamma_{n+1}-\eta}
\frac{	B_n(\eta,\beta_{n-1},\beta_n,\beta_{n+1},\gamma_{n+1})}{{\eta\gamma_{n+2}}},
\end{multlined}\\
\label{eq:LFgamma}
\gamma_{n+2}&=\frac{F_n(\eta,\beta_{n-1},\beta_n,\beta_{n+1},\gamma_{n-1},\gamma_n,\gamma_{n+1})}{G_n(\eta,\beta_{n-1},\beta_n,\beta_{n+1},\gamma_n,\gamma_{n+1})}.
	\end{align}
\end{theorem}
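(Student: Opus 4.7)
The plan is to derive the Laguerre--Freud equations \eqref{eq:LFbeta} and \eqref{eq:LFgamma} by eliminating the subleading coefficient $p^1_{n-1}$ from the three independent expressions already established for it in this section: the Pearson expression \eqref{eq:pn} and the two compatibility-based expressions \eqref{eq:compatibility_4_1} and \eqref{eq:compatibility_4_2}. Among the unknowns at higher index, only $\beta_{n+2}$ and $\gamma_{n+2}$ appear, so these three identities form a linear system in $(p^1_{n-1},\gamma_{n+2}\beta_{n+2},\gamma_{n+2})$ which solves cleanly.

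The key algebraic observation is that $A_n$ depends on the pair $(\beta_{n+2},\gamma_{n+2})$ only through $\gamma_{n+2}$ and $\gamma_{n+2}\beta_{n+2}$, and collecting the relevant terms gives the decomposition
\[
A_n=\hat A_n-\gamma_{n+2}\bigl(\eta^2-\eta(n-1-2\beta_{n+1}-b_1-b_2)+\gamma_{n+1}\bigr)-\eta\gamma_{n+2}\beta_{n+2},
\]
which I would verify by direct inspection, comparing the definition of $A_n$ preceding Theorem~\ref{teo:Hahn} with the definition \eqref{eq:hatA} of $\hat A_n$. Substituting this decomposition into $p^1_{n-1}B_n=A_n$ and isolating the term $\eta\gamma_{n+2}\beta_{n+2}$ yields
\[
\eta\gamma_{n+2}\beta_{n+2}=\hat A_n-p^1_{n-1}B_n-\gamma_{n+2}\bigl(\eta^2-\eta(n-1-2\beta_{n+1}-b_1-b_2)+\gamma_{n+1}\bigr).
\]
Replacing $p^1_{n-1}$ by the right-hand side of \eqref{eq:compatibility_4_2} and dividing through by $\eta\gamma_{n+2}$ produces \eqref{eq:LFbeta} directly.

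For \eqref{eq:LFgamma} I would eliminate $\beta_{n+2}$ by exploiting the second compatibility expression: multiply \eqref{eq:compatibility_4_1} by $\eta$ to obtain the alternative formula $\eta\gamma_{n+2}\beta_{n+2}=\eta(1+\beta_n-\beta_{n+1})(p^1_{n-1}+D_n)-\eta C_n+\eta^2\gamma_{n+2}$ and equate it with the expression above. The result is a single identity linear in $\gamma_{n+2}$, with $p^1_{n-1}$ still entering linearly. Substituting \eqref{eq:compatibility_4_2} once more for $p^1_{n-1}$ and dividing by the common factor $\eta(1+\beta_n-\beta_{n+1})$, the coefficient of $\gamma_{n+2}$ reassembles term-by-term into $G_n$ as defined in \eqref{eq:G}, while the inhomogeneous side collapses into $F_n$ as defined in \eqref{eq:F}. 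The resulting identity $G_n\gamma_{n+2}=F_n$ is precisely \eqref{eq:LFgamma}.

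The main obstacle is bookkeeping rather than conceptual: one must confirm the $A_n=\hat A_n+\cdots$ splitting and then verify that, after clearing the common denominator $\eta(1+\beta_n-\beta_{n+1})(\gamma_n-\gamma_{n+1}-\eta)$, the coefficients of $\gamma_{n+2}$ and of the constant term reorganise exactly into the rational expressions $F_n$ and $G_n$. No structural input beyond the compatibility relations \eqref{eq:compatibility_4_1}, \eqref{eq:compatibility_4_2} and the Pearson-type identity \eqref{eq:pn} is required.
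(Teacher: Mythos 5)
Your proposal is correct and follows essentially the same route as the paper: the decomposition $A_n=\hat A_n-\gamma_{n+2}\bigl(\eta^2-\eta(n-1-2\beta_{n+1}-b_1-b_2)+\gamma_{n+1}\bigr)-\eta\gamma_{n+2}\beta_{n+2}$ is exactly the paper's $\tilde A_n$, the substitution of \eqref{eq:compatibility_4_2} for $p^1_{n-1}$ gives \eqref{eq:LFbeta}, and equating the two expressions for $\eta\gamma_{n+2}\beta_{n+2}$ before a second substitution of \eqref{eq:compatibility_4_2} is equivalent to the paper's step of inserting $\gamma_{n+2}\beta_{n+2}$ back into \eqref{eq:compatibility_4_1} and solving the resulting linear equation $G_n\gamma_{n+2}=F_n$. (Incidentally, your reading that the first substitution uses \eqref{eq:compatibility_4_2} rather than \eqref{eq:compatibility_4_1} is the one consistent with the stated formula \eqref{eq:LFbeta}.)
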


\begin{proof}
	From \eqref{eq:pn} we get
	\begin{align}
\beta_{n+2}=\frac{	\tilde A_n(\eta,\beta_{n-1},\beta_n,\beta_{n+1},\gamma_n,\gamma_{n+1},\gamma_{n+2})-	p^1_{n-1}B_n(\eta,\beta_{n-1},\beta_n,\beta_{n+1},\gamma_{n+1})
}{\eta\gamma_{n+2}},
	\end{align}
	with
	\begin{align*}
	\tilde 	A_n:=\hat A_n(\eta,\beta_{n-1},\beta_n,\beta_{n+1},\gamma_n,\gamma_{n+1})-
	(\eta^2-\eta(n-1-2\beta_{n+1}-b_1-b_2)+\gamma_{n+1})\gamma_{n+2}.
	\end{align*}
	Therefore, recalling \eqref{eq:compatibility_4_1}, we get the Laguerre--Freud  equation  \eqref{eq:LFbeta}.
	
To prove the second Laguerre--Freud equation \eqref{eq:LFgamma} we first	notice that  \eqref{eq:LFbeta} can be written as
		\begin{multline*}
2\eta\beta_{n+2}\gamma_{n+2}=\tilde A_n(\eta,\beta_{n-1},\beta_n,\beta_{n+1},\gamma_n,\gamma_{n+1},\gamma_{n+2})\\-	\frac{E_n(\eta, \beta_{n-1},\beta_n,\beta_{n+1},\gamma_{n-1},\gamma_n,\gamma_{n+1})+\gamma_{n+1}\gamma_{n+2}}{\gamma_n-\gamma_{n+1}-\eta}
B_n(\eta,\beta_{n-1},\beta_n,\beta_{n+1},\gamma_{n+1}).
\end{multline*}
Consequently,  recalling \eqref{eq:compatibility_4_1} we find
	\begin{align*}
p^1_{n-1}&=
\frac{C_n(\eta,\beta_{n-1},\beta_n,\beta_{n+1},\gamma_n)
	-\eta\gamma_{n+2}	}{1+\beta_n-\beta_{n+1}}
-D_n(\beta_{n-1},\beta_{n+1},\gamma_{n+1})+	
\frac{	\gamma_{n+2}\beta_{n+2}}{1+\beta_n-\beta_{n+1}}
\\
	&=
\begin{multlined}[t][.9\textwidth]
	\frac{C_n(\eta,\beta_{n-1},\beta_n,\beta_{n+1},\gamma_n)
		-\eta\gamma_{n+2}}{1+\beta_n-\beta_{n+1}}-D_n(\beta_{n-1},\beta_{n+1},\gamma_{n+1})\\+	
	\frac{	1}{\eta(1+\beta_n-\beta_{n+1})}\Big(\tilde A_n(\eta,\beta_{n-1},\beta_n,\beta_{n+1},\gamma_n,\gamma_{n+1},\gamma_{n+2})\\-	\frac{E_n(\eta, \beta_{n-1},\beta_n,\beta_{n+1},\gamma_{n-1},\gamma_n,\gamma_{n+1})+\gamma_{n+1}\gamma_{n+2}}{\gamma_n-\gamma_{n+1}-\eta}
	B_n(\eta,\beta_{n-1},\beta_n,\beta_{n+1},\gamma_{n+1})\Big).
\end{multlined}
	\end{align*}
Thus, we see that the following relation is fulfilled 
		\begin{align*}
	p^1_{n-1}&=
	\begin{multlined}[t][.9\textwidth]
	\frac{C_n
		-\eta\gamma_{n+2}}{1+\beta_n-\beta_{n+1}}-D_n+	
	\frac{	1}{\eta(1+\beta_n-\beta_{n+1})}\Big(\hat A_n-
	(\eta^2-\eta(n-1-2\beta_{n+1}-b_1-b_2)+\gamma_{n+1})\gamma_{n+2}\\-	\frac{E_n+\gamma_{n+1}\gamma_{n+2}}{\gamma_n-\gamma_{n+1}-\eta}
	B_n\Big).
	\end{multlined}
	\end{align*}
	Now, from Equation \eqref{eq:compatibility_4_1} we get
	\begin{align*}
\frac{E_n+\gamma_{n+1}\gamma_{n+2}}{\gamma_n-\gamma_{n+1}-\eta}
=
\begin{multlined}[t][.85\textwidth]
\frac{C_n
	-\eta\gamma_{n+2}}{1+\beta_n-\beta_{n+1}}-D_n\\+	
\frac{	1}{\eta(1+\beta_n-\beta_{n+1})}\Big(\hat A_n-
(\eta^2-\eta(n-1-2\beta_{n+1}-b_1-b_2)+\gamma_{n+1})\gamma_{n+2}\\-	\frac{E_n+\gamma_{n+1}\gamma_{n+2}}{\gamma_n-\gamma_{n+1}-\eta}
B_n\Big).
\end{multlined}
	\end{align*}
Hence, we find the second Laguerre--Freud equation \eqref{eq:LFgamma}.
\end{proof}

%
%
%

\setcounter{MaxMatrixCols}{30}  
\section{The  ${}_3F_2$ hypergeometrical case}
Let us take $\sigma(z)=\eta(z+a_1)(z+a_2)(z+a_3)$ and  $\theta(z)=z(z+b_1)(z+b_2)$ and consider the corresponding 
Pearson equation 
\begin{align*}
	(k+1)(k+1+b_1)(k+1+b_2)w(k+1)=\eta(k+a_1)(k+a_2)(k+a_3)w(k)
\end{align*}
whose solutions are proportional to 
$w(z)=\frac{(a_1)_z(a_2)_z(a_3)_z}{(b_1+1)_z(b_2+1)_z}\frac{\eta^z}{z!}$,
so that he discrete orthogonality measure is 
\begin{align*}
\rho_z=\sum_{k=0}^\infty \frac{(a_1)_k(a_2)_k(a_3)_k}{(b_1+1)_k(b_2+1)_k}\frac{\eta^k}{k!},
\end{align*}
with  first moment given by the  generalized  hypergeometric function:
$	\rho_0={}_3F_2\left[\!\!{\begin{array}{c}a_1,a_2,a_3 \\b_1+1,b_2+1\end{array}};\eta\right]$,  and subsequent moments
$\rho_n=\vartheta_{\eta}^n {}_3F_2\left[\!\!{\begin{array}{c}a_1,a_2,a_3 \\b_1+1,b_2+1\end{array}};\eta\right]$. The corresponding moments exist for any $\eta\in\C$, whenever one of the $a$'s is a non-positive integer, when $|\eta|<1$   or when $|\eta|=1$ and
\begin{align*}
	 \operatorname{Re}(b_1+b_2-a_1-a_2-a_3)>0.
\end{align*}

\begin{definition}
	We consider
	\begin{align*}
		A_n&:=\begin{multlined}[t][.8\textwidth]n(n+1)+n(\beta_{n-1}+\beta_n+\beta_{n+1})+(a_1+a_2+a_3)(n+\beta_n+\beta_{n+1})+a_1a_2+a_2a_3+a_3a_1\\+\gamma_n+\gamma_{n+1}+\gamma_{n+2}+\beta_n^2+\beta_{n+1}^2+\beta_n\beta_{n+1},
		\end{multlined}\\
		B_n&=\begin{multlined}[t][.8\textwidth]
			-n(\beta_n^2-\beta_{n+1}^2+\beta_{n+1}+\beta_n-\gamma_{n+2})+(\beta_{n+1}-\beta_n-1)(n(b_1+b_2-\beta_{n-1})-b_1b_2)\\-(b_1+b_2)(\beta_{n+1}^2-\beta_n^2-\beta_n-\beta_{n+1})-\gamma_{n+2}(\beta_{n+2}+2\beta_{n+1}+b_1+b_2-1)-\gamma_{n+1}(\beta_{n+1}-\beta_n-1)\\+\gamma_n(\beta_{n-1}+2\beta_n+b_1+b_2-(n-2))-\beta_{n+1}^3+\beta_n^3+\beta_n^2+\beta_{n+1}^2+\beta_n\beta_{n+1},
		\end{multlined}\\
		C_n&=\eta+\beta_{n+1}-\beta_{n-2},\\
			D_n&=\begin{multlined}[t][.8\textwidth]
				\beta^3_n+(b_1+b_2)\left(\beta_n^2-\beta_{n-1}-\beta_{n-2}+\frac{n(n-1)}{2}\right)+b_1b_2\beta_n
				\\-n[\beta^2_n-\beta_{n-1}\beta_{n-2}+(b_1+b_2)\beta_n+\gamma_{n-1}]-\beta_n(\beta_{n-1}+\beta_{n-2})-2\beta_{n-1}\beta_{n-2}-\beta_{n-1}^2-\beta_{n-2}^2	\\+\gamma_n[b_1b_2+(b_1+b_2)(\beta_n+\beta_{n-1}-n+1)-n+\beta_{n-1}+2\beta_n+b_1+b_2]\\-\gamma_{n+1}[(b_1+b_2)(\beta_n+\beta_{n+1}-n)+b_1b_2+n-(\beta_{n+1}+2\beta_n+b_1+b_2)],
			\end{multlined}\\
			E_n&=n\gamma_{n+1}(\beta_n+\beta_{n+1})+\gamma_{n+1}(\beta_{n-1}+\beta_{n-2})-\gamma_n\beta_{n-2}-(n-1)\gamma_n(\beta_{n-1}+\beta_n),\\
			F_n&=\begin{multlined}[t][.95\textwidth]\gamma_{n+1}\left(\frac{n(n-1)}{2}+\gamma_{n+1}+\gamma_{n+2}+\beta_n^2+\beta_{n+1}^2+\beta_n\beta_{n+1}\right)\\-\gamma_n\left(\frac{(n-1)(n-2)}{2}+\gamma_{n-1}+\gamma_n+\beta_n^2+\beta_{n-1}^2+\beta_n\beta_{n-1}\right),
			\end{multlined}\\
			G_n&=\begin{multlined}[t][.8\textwidth](\gamma_{n+1}-\gamma_n)(a_1a_2+a_2a_3+a_3a_1+
				(a_1+a_2+a_3)(\beta_n+n+\gamma_{n+1}\beta_{n+1}-\gamma_n(\beta_{n-1}-1)).
			\end{multlined}
	\end{align*}
\end{definition}

\begin{theorem}(The Laguerre-Freud structure matrix)
The Laguerre-Freud structure matrix is the  following heptadiagonal matrix 
\begin{align*}
\Psi=\left[\begin{NiceMatrix}[
	small]
    \psi^{(0)}_0&\psi^{(1)}_0&\psi^{(2)}_0&H_3&0&\Cdots&&&&\\
    \psi^{(-1)}_0&\psi^{(0)}_1&\psi^{(1)}_1&\psi^{(2)}_1&H_4&\Ddots&&&&\\
    \psi^{(-2)}_0&\psi^{(-1)}_1&\psi^{(0)}_2&\psi^{(1)}_2&\psi^{(2)}_2&H_5&\Ddots&&&\\
    \eta H_3&\psi^{(-2)}_1&\psi^{(-1)}_2&\psi^{(0)}_3&\psi^{(1)}_3&\psi^{(2)}_3&H_6&\Ddots&&\\
    0&\eta H_4&\psi^{(-2)}_2&\psi^{(-1)}_3&\psi^{(0)}_4&\psi^{(1)}_4&\psi^{(2)}_4&H_7&\Ddots&\\
    \Vdots&\Ddots&\Ddots&\Ddots&\Ddots&\Ddots&\Ddots&\Ddots&\Ddots&\\
    \end{NiceMatrix}\right]
\end{align*}

where,
\begin{align*}\psi^{(-2)}_n&=\eta H_{n+2}(\beta_++\beta_{n+1}+\beta_{n+2}+a_1+a_2+a_3+n),\\
\psi^{(-1)}_n&=\begin{multlined}[t][\textwidth]\eta H_{n+1}(\pi^{[2]}_{n-2}+n(\beta_{n-1}+\beta_n+\beta_{n+1}+a_1+a_2+a_3)+\gamma_n+\gamma_{n+1}+\gamma_{n+2}+\beta^2_{n+1}+\beta^2_n\\+\beta_{n+1}\beta_n+(\beta_n+\beta_{n+1})(a_1+a_2+a_3)+a_1a_2+a_2a_3+a_3a_1),\end{multlined}\\
\psi^{(0)}&=\begin{multlined}[t][.9\textwidth]H_n(\beta_n(\beta_n+b_1)(\beta_n+b_2)+\gamma_n(\beta_{n-1}+2\beta_n+b_1+b_2)+\gamma_{n+1}(\beta_{n+1}+2\beta_n+b_1+b_2)\\-n(\beta_n^2+\beta^2_{n-1}+\beta_n\beta_{n-1}+(\beta_n+\beta_{n-1})(b_1+b_2)b_1b_2+\gamma_{n-1}+\gamma_n+\gamma_{n+1})\\+\pi^{[-2]}_{n-2}(\beta_n+\beta_{n-1}+\beta_{n-2}+b_1+b_2)-\pi^{[-3]}_{n-3}),\end{multlined}
\end{align*}
or, alternatively,

\begin{align*}
    \psi^{(0)}_n&=
    \begin{multlined}[t][.8\textwidth]\eta H_n(\pi^{[3]}_{n-3}+\pi^{[2]}_{n-2}(\beta_{n-2}+\beta_{n-1}+\beta_n+a_1+a_2+a_3)+n(\beta^2_{n-1}+\beta^2_n+\beta_n\beta_{n-1}+a_1a_2+a_2a_3+a_3a_1\\+(\beta_n+\beta_{n-1})(a_1+a_2+a_3)+\gamma_{n-1}+\gamma_n+\gamma_{n+1})+\gamma_n(\beta_{n-1}+2\beta_n+a_1+a_2+a_3)\\+\gamma_{n+1}(\beta_{n+1}+2\beta_n+a_1+a_2+a_3)+(\beta_n+a_1)(\beta_n+a_2)(\beta_n+a_3)),
\end{multlined}\\
\psi^{(1)}_n&=
\begin{multlined}[t][.95\textwidth]H_{n+1}(\beta^2_{n+1}+\beta^2_n+\beta_n\beta_{n+1}+(\beta_n+\beta_{n+1})(b_1+b_2)+b_1b_2+\gamma_{n+2}+\gamma_{n+1}+\gamma_n\\-n(\beta_{n+1}+\beta_N+\beta_{n-1}+b_1+b_2)+\pi^{[-2]}_{n-2}),
\end{multlined}\\
\psi^{(2)}_n&=H_{n+2}(\beta_n+\beta_{n+1}+\beta_{n+2}+b_1+b_2-n).
\end{align*}

For the diagonal matrix \(\pi^{[-2]}\) the nontrivial  entries  are
\begin{align}\label{eq:pi2}\pi^{[-2]}_{n-2}=\frac{\eta A(\beta_{n-1},\beta_n,\beta_{n+1},,\beta_{n+2},\gamma_n,\gamma_{n+1},\gamma_{n+2})+B(\beta_{n-1},\beta_n,\beta_{n+1},,\beta_{n+2},\gamma_n,\gamma_{n+1},\gamma_{n+2})}{C(\beta_n,\beta_{n+1},\eta)}
\end{align}

and, the sub-leading coefficient of the corresponding hypergeometrical  discrete orthogonal polynomials is 
\begin{multline*}
    p^1_{n-2}=\frac{\eta A(\beta_{n-1},\beta_n,\beta_{n+1},,\beta_{n+2},\gamma_n,\gamma_{n+1},\gamma_{n+2})+B(\beta_{n-1},\beta_n,\beta_{n+1},,\beta_{n+2},\gamma_n,\gamma_{n+1},\gamma_{n+2})}{C(\beta_n,\beta_{n+1},\eta)}\\+\beta_{n-2}-(n-1)\beta_{n-1}-\frac{(n-3)(n-4)}{2},
\end{multline*}
and the \(\pi^{[-3]}\) matrix entries  are
\begin{multline}\label{eq:pi3}
\pi^{[-3]}_{n-3}=p^1_{n-2}[(\eta+1)(\gamma_n-\gamma_{n+1})+\beta_n+\beta_{n-1}+\beta_{n-2}+b_1+b_2]+D_n(\beta_{n-2},\beta_{n-1},\beta_n,\beta_{n+1},\gamma_{n-1},\gamma_n,\gamma_{n+1})\\+(\eta+1)E_n(\beta_{n-2},\beta_{n-1},\beta_n,\beta_{n+1},\gamma_n,\gamma_{n+1})+(\eta-1)F_n(\beta_{n-1},\beta_n,\beta_{n+1},\gamma_{n-1},\gamma_n,\gamma_{n+1},\gamma_{n+2})\\+\eta G_n(\beta_{n-1},\beta_n,\beta_{n+1},\gamma_n,\gamma_{n+1})
\end{multline}

\end{theorem}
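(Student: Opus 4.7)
The plan is to follow closely the strategy used in the ${}_1F_2$ and ${}_2F_2$ cases, adapted to the heptadiagonal setting. Since $\deg\sigma=\deg\theta=3$, the structure matrix decomposes as
\[
\Psi=(\Lambda^\top)^3\psi^{(-3)}+(\Lambda^\top)^2\psi^{(-2)}+\Lambda^\top\psi^{(-1)}+\psi^{(0)}+\psi^{(1)}\Lambda+\psi^{(2)}\Lambda^2+\psi^{(3)}\Lambda^3,
\]
and the lowest subdiagonal $\psi^{(-3)}=\eta T_-^3 H$ and highest superdiagonal $\psi^{(3)}=T_-^3H$ follow immediately from \eqref{eq:diagonals_Psi}. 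The remaining diagonals are computed by expanding the two factorizations $\Psi=\sigma(J)H\Pi^\top$ and $\Psi=\Pi^{-1}H\theta(J^\top)$ in shift-operator form, exactly as in the proof of Theorem~\ref{teo:Hahn}.

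First I would obtain $\psi^{(-2)}$ and $\psi^{(-1)}$ from $\Psi=\sigma(J)H\Pi^\top$, since the low subdiagonals there involve only $\pi^{[1]}=-\pi^{[-1]}=D$ and elementary products of $\beta$'s and $\gamma$'s; and symmetrically, $\psi^{(2)}$ from $\Psi=\Pi^{-1}H\theta(J^\top)$. For the three central diagonals $\psi^{(0)},\psi^{(1)},\psi^{(2)}$ the first factorization yields expressions involving $\pi^{[2]}$ and $\pi^{[3]}$, while the second yields expressions involving $\pi^{[-2]}$ and $\pi^{[-3]}$. Equating the two expressions for each of $\psi^{(0)}$ and $\psi^{(1)}$ gives a coupled system whose unknowns can be parametrized by $p^1_{n-2}$ and $p^2_{n-2}$ using the explicit formulas \eqref{eq:pis}--\eqref{eq:piss} for $\pi^{[\pm 2]}_{n-j}$ and $\pi^{[\pm 3]}_{n-j}$ and the recursions $p^1_n-p^1_{n-1}=-\beta_{n-1}$, $p^2_{n+1}-p^2_n=-\gamma_n-\beta_n p^1_{n-1}+\beta_n\beta_{n-1}$.

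The central step is to use the symmetry \eqref{eq:pis2}, namely $\pi^{[2]}+\pi^{[-2]}=2D^{[2]}$, to eliminate the ``$+$'' pieces in favor of the ``$-$'' pieces (or vice versa); after shifting indices appropriately to align the diagonals, the $\psi^{(0)}$ equation gives a linear relation for $p^2_{n-2}$ in terms of $p^1_{n-2}$ and the recursion coefficients $\{\beta_j,\gamma_j\}$ appearing in the corresponding window, while the $\psi^{(1)}$ equation gives a second such linear relation. Substituting the first into the second, the $p^2$ dependence cancels and one obtains an equation $C_n\,p^1_{n-2}=\eta A_n+B_n$ (with $A_n,B_n,C_n$ as defined before the statement), yielding formula \eqref{eq:pi2} for $\pi^{[-2]}_{n-2}=\tfrac{(n-2)(n-3)}{2}-(n-3)\beta_{n-2}-p^1_{n-2}$ and the stated expression for $p^1_{n-2}$. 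Back-substitution then produces explicit forms for $\psi^{(0)}_n$ and $\psi^{(1)}_n$ in the two stated variants.

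Finally, to derive \eqref{eq:pi3} for $\pi^{[-3]}_{n-3}$ I would use the compatibility relation \eqref{eq:compatibility_Jacobi_structure_a}, $[\Psi H^{-1},J]=\Psi H^{-1}$, and read off the diagonal of the commutator where $\pi^{[-3]}$ appears linearly (the second subdiagonal or the main diagonal, whichever gives the cleanest dependence). This produces an affine equation for $\pi^{[-3]}_{n-3}$ in which the coefficients of $p^1_{n-2}$ and of the various combinations of $\beta_j,\gamma_j$ organize themselves into precisely $D_n,E_n,F_n,G_n$ given in the definition block. Solving for $\pi^{[-3]}_{n-3}$ yields \eqref{eq:pi3}. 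The main obstacle throughout will be purely computational: managing the bookkeeping of index shifts ($T_\pm$ on each diagonal), tracking the many cross-terms produced by $\sigma(J)$ and $\theta(J^\top)$ acting through two cubic polynomials, and carrying out the elimination of $p^2_{n-2}$ cleanly; no new conceptual idea beyond those already established for the ${}_1F_2$ and ${}_2F_2$ families is needed.
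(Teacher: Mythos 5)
Your first step (reading the outer diagonals and the two competing expressions for the central diagonals off the factorizations $\Psi=\sigma(J)H\Pi^{\top}$ and $\Psi=\Pi^{-1}H\theta(J^{\top})$) and your last step (extracting $\pi^{[-3]}_{n-3}$ from the main diagonal of $[\Psi H^{-1},J]=\Psi H^{-1}$) coincide with the paper. The gap is in the middle step, where you determine $\pi^{[-2]}$ and $p^1_{n-2}$ by equating the two expressions for $\psi^{(0)}$ and for $\psi^{(1)}$ and eliminating $p^2_{n-2}$, ``exactly as in the ${}_2F_2$ case.'' That elimination closed for ${}_2F_2$ because $\deg\sigma=2$: there the $\sigma(J)H\Pi^{\top}$ expression for $\psi^{(1)}$ reached only up to $\pi^{[3]}$, so the two equations involved exactly the two unknowns $p^1,p^2$. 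Here $\deg\sigma=3$, so the $k$-th superdiagonal of $\sigma(J)H\Pi^{\top}$ picks up the $(k+3)$-rd superdiagonal of $H\Pi^{\top}$: the $\psi^{(0)}$ identity involves $\pi^{[\pm 3]}$ (hence $p^1,p^2$), but the $\psi^{(1)}$ identity already involves $\pi^{[4]}$ and hence $p^3$ (and going to $\psi^{(2)}$ brings in $\pi^{[5]}$, etc.). Each new equation introduces a new unknown, so the linear system you describe is underdetermined and the $p^2$-elimination does not produce the closed formula \eqref{eq:pi2}. Note also that the symmetry relations \eqref{eq:pis2} are only available up to $\pi^{[3]}+\pi^{[-3]}$, so they cannot be used to trade $\pi^{[4]}$ for lower data.

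The paper supplies the missing equation from a different source: the Toda-flow compatibility $\vartheta_\eta(\Psi H^{-1})=[\Phi,\Psi H^{-1}]$ with $\Phi=-\Lambda^{\top}\gamma$ (equation \eqref{eq:eta_compatibility_Pearson_1b}). Reading off its first subdiagonal — i.e. applying $\vartheta_\eta$ to $\psi^{(0)}H^{-1}$ and matching against the commutator term built from $\psi^{(-1)}H^{-1}$ — yields a relation that closes, because the $\eta$-derivatives of all the data appearing there are already known from the Toda system ($\vartheta_\eta\beta_n=\gamma_{n+1}-\gamma_n$, $\vartheta_\eta\log\gamma_n=\beta_n-\beta_{n-1}$, $\vartheta_\eta\log H_n=\beta_n$, $\vartheta_\eta p^1_{n}=-\gamma_{n}$) and no higher coefficient $p^3,p^4,\dots$ is introduced. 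Solving that relation gives \eqref{eq:pi2}, and then $p^1_{n-2}$ follows from $\pi^{[-2]}_{n-2}=\frac{n(n-1)}{2}+(n-1)\beta_{n-1}+p^1_{n-1}$ together with $p^1_{n-1}=p^1_{n-2}-\beta_{n-2}$ (your inversion formula for $\pi^{[-2]}_{n-2}$ has the wrong signs and index shifts relative to \eqref{eq:pis}, but that is minor next to the structural issue). You should replace the $p^2$-elimination by this $\eta$-compatibility argument; the rest of your outline then goes through.
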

\begin{proof}
Matrix elements of $\psi^{(-3)},\psi^{(-2)},\psi^{(-1)},\psi^{(0)}$ are obtained using $\Psi=\sigma(J)H\Pi^{\top}$. On the other hand, the entries of $\psi^{(3)},\psi^{(2)},\psi^{(1)}$ and the alternative expression of \(\psi^{(0)}\) are gotten  from $\Psi=\Pi^{-1}H\theta(J^{\top})$.
The matrix \(\pi^{[-2]}\) entries  follow from the equation from the first subdiagonal obtained from $[\Psi H^{-1},\Lambda^{\top}\gamma]=\vartheta_{\eta}(\Psi H^{-1})$. Applying \(\vartheta _{\eta}\) to $\psi^{(0)}H^{-1}$, where $\psi^{(0)}$ is the first expression for the main diagonal, and equating to \(\psi^{(-1)}H^{-1}\), after cleaning it is obtained \(\pi^{[-2]}\). From \(\pi^{[-2]}\) using \(p^1_{n-2}=\pi^{[-2]}_{n-2}-\frac{n(n-1)}{2}-(n-1)\beta_{n-1}+\beta_{n-2}\) it is obtained $p^1_{n-2}$.
Finally, the matrix $\pi^{[-3]}$ is obtained  from the main diagonal of $[\Psi H^{-1},J]=\Psi H^{-1}$.

\end{proof}

\begin{theorem}(Compatibility conditions)
The following relations are satisfied
\begin{align}\label{eq:comp1}
    \pi^{[2]}_{n-1}-\pi^{[2]}_{n-2}&=n(1-\beta_n+\beta_{n-1}) ,\\
\label{eq:comp2}
    \pi^{[3]}_{n-2}-\pi^{[3]}_{n-3}&=\pi^{[2]}_{n-2}(\beta_{n-2}-\beta_n+1)+n\gamma_{n-1}-(n-1)\gamma_n ,\\
\label{eq:comp3}
    \vartheta_{\eta}\pi^{[-2]}_{n-2}&=(n-1)\gamma_n-n\gamma_{n-1} \\
\label{eq:comp4}\vartheta_{\eta}\pi^{[-3]}_{n-3}&=(\gamma_n-\gamma_{n-2})\pi^{[-2]}_{n-2}+(n-1)(\beta_{n-2}-\beta_{n-1}-1)\gamma_n
 \end{align}
\end{theorem}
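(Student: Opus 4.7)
The plan is to verify each of the four identities directly from the explicit formulas already provided for the diagonal entries of $\pi^{[\pm 2]}$ and $\pi^{[\pm 3]}$ in \eqref{eq:pis}--\eqref{eq:piss}, supplemented by the Toda-type identities recorded between the two theorems of Section 1. The four relations split naturally into two groups: \eqref{eq:comp1}--\eqref{eq:comp2} are purely algebraic consequences of the three term recursion and make no use of the $\eta$-flow, whereas \eqref{eq:comp3}--\eqref{eq:comp4} additionally require the Toda rules.

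For \eqref{eq:comp1}, substitute $\pi^{[2]}_{n-1}=\tfrac{n(n+1)}{2}-n\beta_n-p^1_n$ and $\pi^{[2]}_{n-2}=\tfrac{n(n-1)}{2}-(n-1)\beta_{n-1}-p^1_{n-1}$ from \eqref{eq:pis}, subtract, and use the elementary consequence $p^1_{n-1}-p^1_n=\beta_{n-1}$ of $\beta_k=p^1_k-p^1_{k+1}$; everything cancels cleanly to yield $n(1-\beta_n+\beta_{n-1})$. For \eqref{eq:comp2}, the same strategy applies starting from \eqref{eq:piss}, but now the difference $\pi^{[3]}_{n-2}-\pi^{[3]}_{n-3}$ mixes $p^1$- and $p^2$-coefficients, so one additionally invokes the identity $p^2_{n+1}-p^2_n=-\gamma_n-\beta_n p^1_n$, which is the $z^{n-1}$-coefficient of $zP_n=P_{n+1}+\beta_n P_n+\gamma_n P_{n-1}$. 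Once the terms linear in $p^1$ are collected one recognises the factor $\pi^{[2]}_{n-2}(\beta_{n-2}-\beta_n+1)$; what remains is precisely $n\gamma_{n-1}-(n-1)\gamma_n$.

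For the remaining two relations, the key inputs (all stated earlier in the paper) are $\vartheta_\eta p^1_n=-\gamma_n$ and $\vartheta_\eta p^2_n=-\gamma_n p^1_{n-1}$, obtained by matching the coefficients of $z^{n-1}$ and $z^{n-2}$ in the identity $\vartheta_\eta P_n=-\gamma_n P_{n-1}$, together with $\vartheta_\eta\beta_n=\gamma_{n+1}-\gamma_n$ and $\vartheta_\eta\log\gamma_n=\beta_n-\beta_{n-1}$. For \eqref{eq:comp3}, either differentiate $\pi^{[-2]}_{n-2}=\tfrac{n(n-1)}{2}+(n-1)\beta_{n-1}+p^1_{n-1}$ directly, or equivalently use $\pi^{[2]}+\pi^{[-2]}=2D^{[2]}$ from \eqref{eq:pis2} (whose right hand side is $\eta$-independent) to reduce to differentiating $\pi^{[2]}_{n-2}$; either route gives $(n-1)(\gamma_n-\gamma_{n-1})-\gamma_{n-1}$, which equals $(n-1)\gamma_n-n\gamma_{n-1}$ as claimed.

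For \eqref{eq:comp4}, the same mechanism applies: differentiate the explicit formula for $\pi^{[-3]}_{n-3}$ term by term using the Toda rules, producing an expression bilinear in $\gamma$ and $p^1$, in which the $\gamma_{n-1}$-contributions conveniently cancel thanks to the pairing in $\vartheta_\eta(p^1_{n-1}p^1_{n-2})$. The remaining task is to reorganise the result so that the combination $\pi^{[-2]}_{n-2}=\tfrac{n(n-1)}{2}+(n-1)\beta_{n-1}+p^1_{n-1}$ appears explicitly on the right hand side, using $\beta_{n-1}=p^1_{n-1}-p^1_n$ and $\beta_{n-2}=p^1_{n-2}-p^1_{n-1}$ to translate between $\beta$- and $p^1$-variables. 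This final rewriting is the main obstacle: the number of terms is non-trivial and careful bookkeeping is needed to match the precise form on the right hand side of \eqref{eq:comp4}. As a sanity check one can use $\pi^{[3]}+\pi^{[-3]}=2((T_-^2 S^{[1]})D^{[2]}-(T_-D^{[2]})S^{[1]})$ from \eqref{eq:pis2}, whose $\eta$-derivative is straightforward and must be consistent with \eqref{eq:comp4} combined with the corresponding identity for $\vartheta_\eta\pi^{[3]}_{n-3}$.
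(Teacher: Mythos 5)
Your proposal is essentially correct but follows a genuinely different route from the paper. The paper obtains all four identities by computing the two matrix compatibility conditions $[\Psi H^{-1},J]=\Psi H^{-1}$ and $\vartheta_\eta(\Psi H^{-1})=[\Phi,\Psi H^{-1}]$ (with $\Phi=-\Lambda^\top\gamma$) diagonal by diagonal for the heptadiagonal ${}_3F_2$ structure matrix, and reading each relation off a specific sub- or superdiagonal; that is why the theorem is titled ``compatibility conditions.'' You instead bypass $\Psi$ entirely and verify the relations directly from the explicit entries \eqref{eq:pis}--\eqref{eq:piss} of $\pi^{[\pm2]},\pi^{[\pm3]}$, using only $\beta_k=p^1_k-p^1_{k+1}$, the $z^{n-1}$-coefficient of the recursion for $p^2$, and the Toda rules $\vartheta_\eta p^1_n=-\gamma_n$, $\vartheta_\eta p^2_n=-\gamma_n p^1_{n-1}$, $\vartheta_\eta\beta_n=\gamma_{n+1}-\gamma_n$. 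Your split into the two algebraic identities \eqref{eq:comp1}--\eqref{eq:comp2} and the two Toda-flow identities \eqref{eq:comp3}--\eqref{eq:comp4} is accurate, and your ingredients are all correct. What your route buys is that it makes completely manifest the paper's own subsequent remark that these relations are independent of the hypergeometric parameters: they are universal identities for any orthogonal polynomial sequence carrying the Toda flow, and never use the Pearson equation. What the paper's route buys is the structural interpretation: these are precisely the nontrivial scalar content of the compatibility of $\Psi$ with $J$ and with the $\eta$-flow, which is what makes them usable as Laguerre--Freud input.

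One caveat on the deferred ``bookkeeping.'' If you carry out \eqref{eq:comp2} literally with the printed leading term $\pm\frac{(n+3)(n+2)(n+1)}{3}$ of \eqref{eq:piss}, the constants come out as $n(n-1)$ on the left against $\frac{n(n-1)}{2}$ on the right; the identity closes only with the leading term $\pm\binom{n+3}{3}$ (denominator $6$), which is what the Pascal-matrix structure of $\Pi$ forces, so \eqref{eq:piss} contains a typo you would need to correct. More significantly, performing the differentiation in \eqref{eq:comp4} term by term (the $\gamma_{n-1}$-contributions do cancel, as you predict) yields
\begin{align*}
\vartheta_\eta\pi^{[-3]}_{n-3}=(\gamma_{n-2}-\gamma_n)\pi^{[-2]}_{n-2}+(n-1)(1+\beta_{n-1}-\beta_{n-2})\gamma_n,
\end{align*}
i.e.\ the negative of the stated right-hand side. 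So your method is sound and in fact sharper than the paper's sketch, but you should not expect the final reorganisation to ``match the precise form'' of \eqref{eq:comp4} as printed; you should expect to detect an overall sign on its right-hand side.
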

\begin{proof}
Equation \eqref{eq:comp1} is obtained from the second subdiagonal and  the second superdiagonal of the resulting matrix of  $[\Psi H^{-1},J]=\Psi H^{-1}$, it is also obtained from the second subdiagonal of $[\Psi H^{-1},\Lambda^{\top}\gamma]=\vartheta_{\eta}(\Psi H^{-1})$. Equation
\eqref{eq:comp2} is obtained from the first superdiagonal and from the first subdiagonal of $[\Psi H^{-1},J]=\Psi H^{-1}$, it is also obtained from the first subdiagonal of $[\Psi H^{-1},\Lambda^{\top}\gamma]=\vartheta_{\eta}(\Psi H^{-1})$ using the alternative expression of \(\psi^{(0)}\). Then, 
\eqref{eq:comp3} is a derived from the first superdiagonal of  $[\Psi H^{-1},\Lambda^{\top}\gamma]=\vartheta_{\eta}(\Psi H^{-1})$ and Equation 
\eqref{eq:comp4} is gotten from the main diagonal of $[\Psi H^{-1},\Lambda^{\top}\gamma]=\vartheta_{\eta}(\Psi H^{-1})$.
\end{proof}

\begin{rem}
\begin{enumerate}
	\item 	The first three  equations are equivalent to
	\begin{align}\label{eq:comp1_bis}\pi^{[-2]}_{n-2}-\pi^{[-2]}_{n-1}&=n(\beta_{n-1}-\beta_n-1),\\
\label{eq:comp2_bis} \pi^{[-3]}_{n-2}-\pi^{[-3]}_{n-3}&=\pi^{[-2]}_{n-2}(1+\beta_n-\beta_{n-2})-(n-1)\gamma_n+n\gamma_{n-1},\\
 \vartheta_{\eta}p^1_{n-1}&=-\gamma_{n-1}.\notag
\end{align}

\item Notice that compatibility conditions do not depend of hypergeometric parameters $a$'s or $b$'s.
\item  Equations \eqref{eq:pi2} and \eqref{eq:comp1_bis} combine and give a constraint among $(\beta_{n-1},\beta_n,\beta_{n+1},\beta_{n+2},\gamma_n,\gamma_{n+1},\gamma_{n+2})$, while
Equations \eqref{eq:pi3} and \eqref{eq:comp2_bis} leads to a ligature between $(\beta_{n-2},\beta_{n-1},\beta_n,\beta_{n+1},\beta_{n+2},\gamma_{n-1},\gamma_n,\gamma_{n+1},\gamma_{n+2})$. With these two equations at hand two Laguerre--Freud equations of the type \eqref{eq:LF} can be derived.

\end{enumerate}
\end{rem}

\section*{Conclusions and outlook}

 Adler and van Moerbeke have throughly used the Gauss--Borel factorization of the moment matrix, see \cite{adler_moerbeke_1,adler_moerbeke_2,adler_moerbeke_4}, In their studies of integrable systems and orthogonal polynomials. We have applied this ideas 
    in different contexts, CMV orthogonal polynomials, matrix orthogonal polynomials, multiple orthogonal polynomials  and multivariate orthogonal, see  \cite{am,afm,nuestro0,nuestro1,nuestro2,ariznabarreta_manas0,ariznabarreta_manas01,ariznabarreta_manas2,ariznabarreta_manas3,ariznabarreta_manas4,ariznabarreta_manas_toledano}.
For a general overview see \cite{intro}. 

Recently \cite{Manas_Fernandez-Irrisarri} we we applied that approach to  study  the consequences of the Pearson equation on the moment matrix and Jacobi matrices. For that description a new banded matrix is required, the Laguerre--Freud structure matrix that encodes the Laguerre--Freud relations for  the recurrence coefficients. We have also found that the contiguous relations fulfilled generalized hypergeometric functions determining the moments of the weight described for the squared norms of the orthogonal polynomials a discrete Toda hierarchy known as Nijhoff--Capel equation, see \cite{nijhoff}.  In \cite{Manas} we studied  the role of Christoffel and Geronimus transformations for the description of the mentioned contiguous relations, and the use of the Geronimus--Christoffel transformations  to characterize the shifts in the spectral independent variable of the orthogonal polynomials. 
In \cite{Irrisari-Manas} we  deepened in that program and searched further for the discrete semi-classical cases, finding Laguerre--Freud relations for the recursion coefficients for three types of  discrete orthogonal polynomials of generalized Charlier, generalized Meixner and  generalized Hahn of type I cases. In this paper we concluded this program by getting the Laguerre--Freud structure matrices and equations for three families of hypergeometrical discrete orthogonal polynomials.

A question of general character that remains open is the possibility of explicit determination (using the methods of \cite{Irrisari-Manas} and this paper) of  Laguerre--Freud equations for arbitrary hypergeometrical families ${}_p F_q$ of discrete orthogonal polynomials. In  \cite{Irrisari-Manas} and this paper we have shown that is possible for the cases $(p,q)=(0,1),(1,1),(2,1),(1,2),(2,2),(3,2)$. For the future, 
we will   also extend these techniques to multiple discrete orthogonal polynomials  \cite{Arvesu_Coussment_Coussment_VanAssche} and study its relations with the transformations presented in \cite{bfm}
and  quadrilateral lattices \cite{quadrilateral1,quadrilateral2}.
%

\end{document}